\let\mathcal\mathscr
\title{\sc Wonderful resolutions and categorical crepant resolutions of singularities}
\author{\sc Roland Abuaf \footnote{Institut Fourier, 100 rue des maths,
38402, Saint
Martin d'H\`eres, France. E-mail :\it{roland.abuaf@ujf-grenoble.fr}}}
\let\mathcal\mathscr
\newtheorem{theo}{Theorem}[subsection]
\newtheorem{claim}[theo]{Claim}
\newtheorem{exem}[theo]{Example}
\newtheorem{prop}[theo]{Proposition}
\newtheorem{quest}[theo]{Question}
\newtheorem{defi}[theo]{Definition}
\newtheorem{lem}[theo]{Lemma}
\newtheorem{cor}[theo]{Corollary}
\newtheorem{conj}[theo]{Conjecture}
\def\DB{\mathrm{D^{b}}}
\def\OO{\mathcal{O}}
\def\w{\omega}
\def\DP{\mathrm{D^{perf}}}
\def\Ri{\mathrm{R^{i}}}
\def\R0{\mathrm{R^{0}}}
\def\ot{\otimes}
\def\Hi{\mathrm{H^i}}
\def\HH{\mathrm{Hom}}
\def\Hh{\mathcal{H}om}
\def\LL{\mathrm{\textbf{L}}}
\def\Li{\mathrm{L^{i}}}
\def\RR{\mathrm{\textbf{R}}}
\def\OO{\mathcal{O}}
\def\d{\delta}
\def\P{\mathcal{P}}
\def\F{\mathcal{F}}
\def\T{\mathcal{T}}
\def\A{\mathcal{A}}
\def\p12{\pi_{{\T_1},{\T_2}}}
\newcommand{\leftexp}[2]{{\vphantom{#2}}^{#1}{#2}}
\newenvironment{proof}
\begin{document}

\maketitle

\begin{abstract}

Let $X$ be an algebraic variety with Gorenstein singularities. We define the
notion of a \textit{wonderful resolution of singularities} of $X$ by analogy
with the theory of wonderful compactifications of semi-simple linear algebraic
groups. We prove that if $X$ has rational singularities and has a wonderful
resolution of singularities, then $X$ admits a categorical crepant resolution of
singularities. As an immediate corollary, we get that all determinantal
varieties defined by the minors of a generic square/symmetric/skew-symmetric
matrix admit categorical crepant resolution of singularities. 

\end{abstract}

\vspace{\stretch{1}}

\newpage

\tableofcontents

\begin{section}{Introduction}
 
Let $X$ be an algebraic variety over $\mathbb{C}$. Hironaka proved in
\cite{hiro} that one can find a proper birational morphism $ \tilde{X}
\rightarrow X$, with $\tilde{X}$ smooth. Such a $\tilde{X}$ is called a
\textit{resolution of singularities} of $X$. Unfortunately, given an
algebraic variety $X$, there is, in general, no \textit{minimal} resolution of singularities of $X$. In case $X$ is
Gorenstein, a crepant resolution of $X$ (that is a resolution $\pi : \tilde{X}
\rightarrow X$ such that $\pi^* K_X = K_{\tilde{X}}$) is often considered to be
minimal. The conjecture of Bondal-Orlov (see \cite{BO}) gives a precise meaning
to that notion of minimality:

\begin{conj}
 Let $X$ be an algebraic variety with canonical Gorenstein singularities. Assume that $X$ has a crepant resolution of singularities $\tilde{X} \rightarrow X$. Then, for any other resolution of singularities $Y \rightarrow X$, there exists a fully faithful embedding:

\begin{equation*}
\DB (\tilde{X}) \hookrightarrow \DB (Y). 
\end{equation*}
\end{conj}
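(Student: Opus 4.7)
This is the Bondal--Orlov conjecture, which is established only in special cases (two crepant resolutions via Kawamata/Bridgeland, threefolds, toric and moduli-theoretic situations, etc.) and is open in general. I describe the natural Fourier--Mukai strategy aimed squarely at the stated embedding $\DB(\tilde X) \hookrightarrow \DB(Y)$, and isolate where the open difficulty lies.

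\textit{Step 1: Common resolution and discrepancy inequality.} Write $\sigma: Y \to X$. By Hironaka, choose a smooth projective $Z$ dominating the birational join of $\tilde X$ and $Y$ over $X$, with morphisms $p: Z \to \tilde X$ and $q: Z \to Y$ satisfying $\pi \circ p = \sigma \circ q$. Because $p$ and $q$ are birational morphisms between smooth varieties, $\omega_{Z/\tilde X} = \OO_Z(E)$ and $\omega_{Z/Y} = \OO_Z(F)$ for effective divisors $E, F$. Combining the crepancy hypothesis $K_{\tilde X} = \pi^* K_X$ with the canonical-singularities inequality $K_Y = \sigma^* K_X + D$, where $D$ is effective and $\sigma$-exceptional, gives the crucial comparison
\[ E \;=\; F + q^* D, \qquad \text{hence} \qquad E \geq F. \]

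\textit{Step 2: Candidate embedding.} I propose $\Phi := \RR q_* \circ \LL p^*: \DB(\tilde X) \to \DB(Y)$, the Fourier--Mukai transform with kernel $\RR (p,q)_* \OO_Z \in \DB(\tilde X \times Y)$. Its right adjoint is $\Phi^R(G) = \RR p_*(\LL q^* G \otimes \OO_Z(F))$, since $q$ is birational. Full faithfulness of $\Phi$ is equivalent to the unit $\mathrm{id}_{\DB(\tilde X)} \to \Phi^R \Phi$ being an isomorphism. By flat base change along $q$ applied to the fibre square with projections $\pi_1, \pi_2 : Z \times_Y Z \to Z$, this unwinds to the statement that the twisted pushforward $\RR (p\pi_1, p\pi_2)_* \pi_2^* \OO_Z(F)$ on $\tilde X \times \tilde X$ is supported on the diagonal and equals $\OO_{\Delta_{\tilde X}}$ there.

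\textit{Step 3: Verification and main obstacle.} On the diagonal the relevant contribution is $\RR p_* \OO_Z(F)$. The inclusion $\OO_Z(F) \hookrightarrow \OO_Z(E) = \omega_{Z/\tilde X}$ provided by the inequality $E \geq F$, combined with the Grauert--Riemenschneider identity $\RR p_* \omega_{Z/\tilde X} = \OO_{\tilde X}$, reduces the diagonal part to a local vanishing for the cokernel supported on $q^*D$; this becomes tractable after a refined choice of $Z$ making $E - F$ a divisor with simple normal crossings. The genuine difficulty is the \emph{off-diagonal} vanishing: the irreducible components of $Z \times_Y Z$ lying over $q$-contracted divisors of $Y$ must contribute zero under the twisted pushforward. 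My plan here is to invoke the weak factorisation theorem of Abramovich--Karu--Matsuki--W\l odarczyk to decompose the birational map $\tilde X \dashrightarrow Y$ into a chain of smooth blow-ups and blow-downs, and to propagate the fully faithful embedding inductively: each blow-up contributes an embedding via Orlov's semi-orthogonal decomposition, while each blow-down is handled by a local analogue of Step 3 in which the positivity $E - F = q^* D \geq 0$ is preserved. Carrying out this induction uniformly --- in particular, controlling how the discrepancy divisor evolves under each elementary birational step and ensuring the off-diagonal terms vanish without any residual cohomology surviving --- is the central open content of the Bondal--Orlov conjecture, and is exactly the point at which a direct geometric proof currently breaks down.
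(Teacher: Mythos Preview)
The statement you were asked to prove is labelled \emph{Conjecture} in the paper (it is the Bondal--Orlov conjecture, cited as \cite{BO}), and the paper offers no proof of it whatsoever; it is quoted only as motivation for the notion of categorical crepant resolution. So there is no ``paper's own proof'' to compare against.

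Your proposal is therefore appropriate in spirit: you correctly identify the statement as the open Bondal--Orlov conjecture, you lay out the standard Fourier--Mukai candidate $\Phi = \RR q_* \LL p^*$ through a common roof, you derive the discrepancy inequality $E \ge F$ from crepancy of $\tilde X$ and canonicity of $X$, and you honestly isolate the off-diagonal vanishing as the unresolved core. That is an accurate summary of where the problem stands. One small caution: the weak-factorisation strategy in Step~3 does not straightforwardly ``propagate'' full faithfulness, because blow-downs go the wrong way---Orlov's decomposition embeds $\DB$ of the base into $\DB$ of the blow-up, not conversely---so the induction you sketch would need to track a more subtle invariant than a single fully faithful functor. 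This is consistent with your own conclusion that the argument breaks down at exactly this point, but it is worth being explicit that the obstruction is structural rather than merely technical.
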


Varieties admitting a crepant resolution of singularities are quite rare. For instance, non-smooth Gorenstein $\mathbb{Q}$-factorial terminal
singularities (e.g. a cone over $v_2(\mathbb{P}^m) \subset
\mathbb{P}^{\frac{m(m+1)}{2}}$, for even $m$, see \cite{kuz}, section $7$) never
admit crepant resolution of
singularities. Thus, it seems natural to look for minimal resolutions among
\textit{categorical} ones. Kuznetsov has given the following definition (\cite{kuz}):

\begin{defi}
Let $X$ be an algebraic variety with Gorenstein and rational singularities. A
\emph{categorical resolution of singularities} of $X$ is a
triangulated category $\T$ with a functor $\RR {\pi_{\T}}_* : \T \rightarrow
\DB(X)$ such that:
\begin{itemize}
\item there exists a resolution of singularities $\pi : \tilde{X} \rightarrow
X$ such that $\d : \T \hookrightarrow \DB(\tilde{X})$ is admissible and $\RR
{\pi_{\T}}_* = \RR \pi_* \circ \d$,

\item we have $\LL \pi^* \DP(X) \subset \T$ and for all $\F \in \DP(X)$:

\begin{equation*}
\RR {\pi_{\T}}_* \LL \pi_{\T}^* \F \simeq \F,
\end{equation*}
where $\LL \pi_{\T}^*$ is the left adjoint to $\RR {\pi_{\T}}_*$.
\end{itemize}
\bigskip

\noindent If for all $\F \in \DP(X)$, there is a quasi-isomorphism:
\begin{equation*}
\LL \pi_{\T}^* \F \simeq \LL \pi_{\T}^! \F,
\end{equation*}
where $\LL \pi_{\T}^!$ is the right adjoint of $\RR {\pi_{\T}}_*$, we say that
$\T$ is \emph{weakly crepant}.

\bigskip

\noindent Finally, if $\T$ has a structure of module category over $\DP(X)$ and the identity is a relative Serre functor for $\T$ with respect to $\DB(X)$, then $\T$ is said to be \emph{strongly
crepant}.
\end{defi} 

Obviously, if $\T$ is a strongly crepant resolution of $X$, then it is also a weakly crepant resolution of $X$. The converse is false, as shown is section $8$ of \cite{kuz}. If $\pi : \tilde{X} \rightarrow X$ is a crepant resolution of $X$, the one easily shows that $\DB(\tilde{X}) \rightarrow \DB(X)$ is a strongly crepant categorical resolution. The main result of \cite{kuz} is the:

\begin{theo} \label{kuzmaintheo}
Let $X$ be an algebraic variety with Gorenstein rational singularities. Let $\pi : \tilde{X} \rightarrow X$ be a resolution of singularities with a positive integer $m$ such that $K_{\tilde{X}} = \pi^* K_X \ot \OO_{\tilde{X}}(mE)$, where $E$ is the scheme-theoretic exceptional divisor of $\pi$. Assume moreover that we have a semi-orthogonal decomposition:

\begin{equation*}
\DB(E) = \langle \OO_{E}(mE) \ot B_m, \ldots, \OO_{E}(E) \ot B_1, B_0 \rangle,
\end{equation*}
with:
\begin{equation*}
\LL \pi^* \DB(\pi(E)) \subset B_m \subset \cdots \subset B_1 \subset B_0, 
\end{equation*}
then $X$ admits a categorical weakly crepant resolution of singularities. 

\bigskip

\noindent Assume moreover that $B_m = \cdots = B_1 = B_0$, then $X$ admits a categorical strongly crepant resolution of singularities.
\end{theo}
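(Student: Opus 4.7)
The plan is to build $\T$ as a suitable right-orthogonal subcategory of $\DB(\X)$, obtained by discarding exactly those pieces supported on the exceptional divisor $E$ that are responsible for $\X$ being non-crepant. Let $j : E \hookrightarrow \X$ be the closed embedding. Define $\A_k := B_{k-1} \cap B_k^\perp$ (taken inside $B_{k-1}$) for $k = 1, \ldots, m$, so that $B_{k-1} = \langle B_k, \A_k \rangle$ is a semi-orthogonal decomposition. Let $\mathcal{K} \subset \DB(\X)$ be the subcategory generated by the pushforwards $j_*(\OO_E(kE) \ot \A_k)$ for $k = 1, \ldots, m$, and set $\T := {}^\perp \mathcal{K}$. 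The geometric motivation for these particular twists is that $\omega_{\X/X} \simeq \OO_{\X}(mE)$, so the relative canonical sheaf lives precisely in the range of twists $k = 1, \ldots, m$.

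The first step is to establish that $\mathcal{K}$ carries an honest semi-orthogonal decomposition with components $j_*(\OO_E(kE) \ot \A_k)$, thus ensuring that $\T$ is admissible. The fully faithfulness of each $j_*(\OO_E(kE) \ot -) : \A_k \hookrightarrow \DB(\X)$ and the mutual semi-orthogonality of these images reduce, via the adjunctions $j^* \dashv j_*$ and $j_* \dashv j^!$ together with the standard computation of $j^* j_*$ and $j^! j_*$ from the conormal sequence of $E \subset \X$, to $\mathrm{Ext}$-vanishings in $\DB(E)$ which are guaranteed by the given Lefschetz decomposition and the inclusions $\A_k \subset B_{k-1}$. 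Next, for any $\F \in \DP(X)$, the restriction $j^* \LL \pi^* \F = \LL(\pi|_E)^* \F$ factors through $\LL(\pi|_E)^* \DB(\pi(E)) \subset B_m$; since $B_m \subset B_k$ for every $k$, it is in particular right-orthogonal to $\A_k$, and hence $\LL \pi^* \F$ lies in $\T$. Combined with $\RR \pi_* \LL \pi^* \F \simeq \F$ (rational singularities plus projection formula), this furnishes the functor $\RR {\pi_\T}_* = \RR \pi_*|_\T$ and its left adjoint, and verifies the first two axioms of a categorical resolution.

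The crux, and the main anticipated obstacle, is weak crepancy. Relative Grothendieck duality together with the identity $K_{\X} = \pi^* K_X \ot \OO_{\X}(mE)$ yields $\LL \pi_\T^! \F \simeq P_\T(\LL \pi^* \F \ot \OO_{\X}(mE))$, where $P_\T$ denotes the projection onto $\T$. Filtering $\OO_{\X}(mE)$ by means of the short exact sequences
\begin{equation*}
0 \to \OO_{\X}((k-1)E) \to \OO_{\X}(kE) \to \OO_E(kE) \to 0, \quad k = 1, \ldots, m,
\end{equation*}
expresses the cone of the natural map $\LL \pi^* \F \to \LL \pi^* \F \ot \OO_{\X}(mE)$ as an iterated extension of objects of the form $j_*(\OO_E(kE) \ot (\LL \pi^* \F)|_E)$. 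Each such object lies in $j_*(\OO_E(kE) \ot B_m) \subset j_*(\OO_E(kE) \ot B_k)$ by the nesting hypothesis, and a careful mutation analysis inside the Lefschetz decomposition shows that the projection $P_\T$ annihilates all of these contributions; hence $\LL \pi_\T^* \F \simeq \LL \pi_\T^! \F$ functorially on $\DP(X)$. This mutation bookkeeping is the technical heart of the proof, and it is precisely the place where the chain $B_m \subset \cdots \subset B_0$ is used in full strength. Finally, for strong crepancy when $B_m = \cdots = B_0$, the Lefschetz decomposition is rectangular: $\T$ inherits a $\DP(X)$-module structure via $\LL \pi^*$ and tensor product, and the same Serre-functor analysis collapses immediately to identify the relative Serre functor of $\T$ over $\DB(X)$ with the identity.
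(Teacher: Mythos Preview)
Note first that this theorem is not proved in the paper itself; it is quoted from \cite{kuz} as Kuznetsov's main result. The paper's own arguments (the $n=1$ case of Proposition~\ref{keypropbis}, together with the crepancy verification following Proposition~\ref{keyprop}) amount to the special instance of Kuznetsov's proof in which $B_1=\cdots=B_m=q^*\DB(Y)$ and $B_0=D_{E}$. Your overall architecture---take $\T$ to be the left orthogonal of certain pushforwards from $E$, check $\pi^*\DP(X)\subset\T$, then prove crepancy by filtering $\OO_{\X}(mE)$---is exactly the right one and matches both Kuznetsov and the paper.

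The genuine error is in your choice of the subcategory $\mathcal{K}$ to discard. You build $\mathcal{K}$ from the \emph{primitive} pieces $\A_k=B_{k-1}\cap B_k^{\perp}$, whereas Kuznetsov (and Lemma~\ref{divisor} in the paper) uses the \emph{full} blocks: $\mathcal{K}=\langle j_*(B_m(mE)),\ldots,j_*(B_1(E))\rangle$, so that $\T=\{F:j^*F\in B_0\}$. This matters decisively in the crepancy step. The filtration pieces of the cone of $\pi^*\F\to\pi^*\F\otimes\OO_{\X}(mE)$ are $j_*\bigl((\pi^*\F)|_E\otimes\OO_E(kE)\bigr)$ with $(\pi^*\F)|_E\in B_m$. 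Since $B_m\subset B_k$, each such piece lies in $j_*(B_k(kE))$, which is a component of Kuznetsov's $\mathcal{K}$ and is therefore annihilated by the projection to $\T$---no mutation is needed. With your $\mathcal{K}$, however, $B_m$ and $\A_k$ are \emph{orthogonal complements} inside $B_{k-1}$: in particular $B_m\cap\A_k=0$, so $j_*(B_m(kE))$ does not lie in the span of the $j_*(\A_l(lE))$, and the ``careful mutation analysis'' you invoke cannot produce the required containment. (The same mismatch also undermines your admissibility step: full faithfulness of $j_*$ on a twisted block $C(kE)$ reduces to $\HH_E(C,C(E))=0$, which for $C=B_k$ follows from $B_k\subset B_1$ via the Lefschetz semi-orthogonality, but for $C=\A_1$ fails since $\A_1\not\subset B_1$ by construction.) Replacing $\A_k$ by $B_k$ throughout repairs the argument completely.
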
 

As a consequence, Kuznetsov obtains (see \cite{kuz}, sections $7$ and $8$) the:

\begin{cor} \label{kuzexample}
The following varieties admit a categorical strongly crepant resolution of singularities:
\begin{itemize}
\item a cone over $v_2(\mathbb{P}^n) \subset \mathbb{P}(S^2 \mathbb{C}^{n+1})$ (odd $n$),
\item a cone over $\mathbb{P}^n \times \mathbb{P}^n \subset \mathbb{P}(\mathbb{C}^{n+1} \ot \mathbb{C}^{n+1})$ (any $n$),
\item the Pfaffian variety : $\mathbb{P}f_4(n) := \mathbb{P} \{ \w \in \bigwedge^2 \mathbb{C}^n,\,\, \text{such that}\,\ rk(w) \leq 4 \}$ (odd $n$). 
\end{itemize}
\bigskip

\noindent The following varieties admit categorical weakly crepant resolution of singularities:
\begin{itemize}
\item a cone over a smooth Fano variety in its anti-canonical embedding,
\item the Pfaffian variety $\mathbb{P}f_4(n)$ (even $n$).
\end{itemize}
\end{cor}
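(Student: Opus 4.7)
The plan is to apply Theorem \ref{kuzmaintheo} case by case. For each listed variety one must (i) exhibit an explicit resolution $\pi:\tilde X\to X$ whose exceptional locus is a divisor $E$, (ii) compute the discrepancy integer $m$ such that $K_{\tilde X}=\pi^* K_X\otimes\OO_{\tilde X}(mE)$, and (iii) produce a Lefschetz-type semi-orthogonal decomposition of $\DB(E)$ whose components $B_0\supset\cdots\supset B_m$ contain $\LL\pi^*\DB(\pi(E))$, so that Kuznetsov's machine applies. In the first three (rectangular) cases, one moreover wants all $B_i$ to coincide, in order to reach the strongly crepant conclusion.

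For the cone $X$ over $v_2(\mathbb{P}^n)\subset\mathbb{P}(S^2\mathbb{C}^{n+1})$, I would take $\tilde X = \mathrm{Tot}(\OO_{\mathbb{P}^n}(-2))$ with $\pi$ the affine contraction of the zero section; then $E\simeq\mathbb{P}^n$, $\pi(E)$ is a point, and a short adjunction calculation yields $m=\frac{n-1}{2}$ (an integer precisely for $n$ odd). Similarly, the cone over $\mathbb{P}^n\times\mathbb{P}^n$ is resolved by $\tilde X=\mathrm{Tot}(\OO_{\mathbb{P}^n\times\mathbb{P}^n}(-1,-1))$, with $E\simeq\mathbb{P}^n\times\mathbb{P}^n$ and an analogous discrepancy computation. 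For the Pfaffian $\mathbb{P}f_4(n)$, the natural resolution is $\tilde X=\mathbb{P}(\Omega)\to\mathbb{P}f_4(n)$ where $\Omega$ is the vector bundle over $\mathrm{Gr}(n-4,n)$ whose fibre over a $(n-4)$-plane $U$ is $\bigwedge^2(\mathbb{C}^n/U)^*$, and $E$ is a projective bundle over the locus of strictly smaller rank; its discrepancy is integer precisely when $n$ is odd, and one must adapt for even $n$.

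Once the geometric side is set up, the heart of the argument is constructing the Lefschetz decomposition of $\DB(E)$. For the cones over $v_2(\mathbb{P}^n)$ and $\mathbb{P}^n\times\mathbb{P}^n$, since $E$ is a (product of) projective space and $\OO_E(E)$ is an explicit line bundle, one starts from Beilinson's exceptional collection and regroups it in packets of size $m+1$ twisted by successive powers of $\OO_E(E)$, chosen so that each block $B_i$ is the \emph{same} admissible subcategory containing $\OO_E=\LL\pi^*\OO_{\pi(E)}$; this rectangularity gives the strongly crepant conclusion. For the Pfaffian case with $n$ odd, the same program is carried out using Kapranov's collection on the Grassmannian $\mathrm{Gr}(n-4,n)$ and the projective bundle formula for $E$. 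For the two weakly crepant items (cones over arbitrary smooth Fano varieties $F$ in their anti-canonical embedding, and $\mathbb{P}f_4(n)$ with $n$ even) the discrepancy $m$ may fail to be compatible with a rectangular decomposition, but one can still arrange a \emph{non-rectangular} nested chain $B_m\subset\cdots\subset B_0$ by taking $B_0$ to be any Lefschetz basis for $\DB(F)$ (resp.\ for the corresponding $E$) and truncating at the appropriate level.

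The principal obstacle in each case is the explicit verification of the chain of inclusions $\LL\pi^*\DB(\pi(E))\subset B_m\subset\cdots\subset B_0$, together with the semi-orthogonality of the twisted blocks $\OO_E(iE)\otimes B_i$; this comes down to a cohomology vanishing computation on $E$ involving the normal bundle $\OO_E(E)$, which is manageable because $E$ is homogeneous (or a projective bundle over a homogeneous variety) in every listed case, so Borel--Bott--Weil is available. The Pfaffian case with $n$ even is the most delicate: the discrepancy is not a positive integer in the naive resolution, so one must either work with a modified resolution or explain why only a weakly crepant (rather than strongly crepant) categorical resolution can be produced from Theorem \ref{kuzmaintheo}.
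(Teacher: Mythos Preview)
The paper does not prove this corollary at all: it is stated as a direct consequence of Kuznetsov's work and the reader is referred to \cite{kuz}, sections~7 and~8. Your outline is broadly in line with what Kuznetsov actually does there---choose a natural resolution with irreducible smooth exceptional divisor, compute the discrepancy, and build a Lefschetz decomposition of $\DB(E)$ from standard exceptional collections (Beilinson for projective spaces, Kapranov for Grassmannians) so that Theorem~\ref{kuzmaintheo} applies.

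There is, however, one genuine misconception in your sketch. For the Pfaffian $\mathbb{P}f_4(n)$ the discrepancy is \emph{always} a positive integer: the variety is Gorenstein for every $n$ (skew-symmetric determinantal varieties are Gorenstein with no parity restriction, see the discussion around Example~\ref{coli}), and since the exceptional divisor of the blow-up along $\mathrm{Gr}(2,n)$ is irreducible, $K_{\tilde X}=\pi^*K_X\otimes\OO_{\tilde X}(mE)$ with $m\in\mathbb{Z}_{>0}$ automatically. The parity of $n$ is \emph{not} what controls the integrality of $m$; rather, it governs whether the Lefschetz decomposition of $\DB(E)$ can be made \emph{rectangular} (all $B_i$ equal). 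For odd $n$ one can rearrange Kapranov-type blocks on the flag variety $E\simeq\mathrm{Fl}(2,4,n)$ into $m+1$ identical pieces, yielding the strongly crepant conclusion; for even $n$ this repackaging fails and one is left with a genuinely strictly nested chain $B_m\subsetneq\cdots\subsetneq B_0$, hence only the weakly crepant part of Theorem~\ref{kuzmaintheo} applies. Your final paragraph (``the discrepancy is not a positive integer in the naive resolution'') should be rewritten accordingly: the obstacle is combinatorial, not numerical.
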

Of course, one would like to generalize Kuznetsov's result, to apply it to higher corank determinantal varieties for instance. Using Kodaira relative vanishing theorem and some adjunction formulae, it is not difficult to prove the following (this is the case $n=1$ of Proposition \ref{keypropbis}):

\begin{prop} 
Let $X$ be an algebraic variety with Gorenstein rational singularities. Let $\pi : \tilde{X} \rightarrow X$ be a resolution of singularities such that the exceptional divisor $E$ of $\pi$ is irreducible, smooth and flat over $\pi(E)$. Then there exists a positive integer $m$ such that:

\begin{equation*}
K_{\tilde{X}} = K_X \ot \OO_{\tilde{X}}(mE),
\end{equation*}
and we have a semi-orthogonal decomposition:
\begin{equation*}
\DB(E) = \langle \OO_E(mE) \ot B_m, \ldots, \OO_E(E) \ot B_1, B_0 \rangle,
\end{equation*}
with $\LL \pi^* \DP(\pi(E)) \subset B_m \subset \cdots \subset B_1 \subset B_0$.
\end{prop}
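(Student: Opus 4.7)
The plan is to reduce the existence of the semi-orthogonal decomposition to a single relative vanishing statement on $E$ and then to establish that vanishing using relative Kawamata--Viehweg together with the normality of $X$.

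Since $X$ is Gorenstein, $K_X$ is Cartier, and because $E$ is the unique irreducible exceptional divisor of $\pi$, there is a well-defined integer $m \geq 0$ (Gorenstein rational singularities are canonical) with $K_{\tilde{X}} = \pi^* K_X \ot \OO_{\tilde{X}}(mE)$. I assume $m \geq 1$, the case $m = 0$ giving a trivial one-block decomposition. Writing $p := \pi|_E \colon E \to Z := \pi(E)$, the fact that $E$ is the single exceptional divisor of the birational contraction $\pi$ makes $-E$ $\pi$-ample, so $\OO_E(-E)$ is $p$-ample. My candidates for the blocks are $B_1 = B_2 = \cdots = B_m := \LL p^* \DB(Z)$, with $B_0$ defined as the left orthogonal $\leftexp{\perp}{\langle \OO_E(mE) \ot B_m, \ldots, \OO_E(E) \ot B_1 \rangle}$ inside $\DB(E)$; granting the key vanishing below, these choices automatically enforce the nested containment of the proposition.

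The heart of the proof is the relative vanishing
\begin{equation*}
\RR p_* \OO_E(\ell E) = 0 \qquad \text{for every } 1 \leq \ell \leq m.
\end{equation*}
Via the identity $\RR \pi_* \circ j_* = i_* \circ \RR p_*$ (with $j \colon E \hookrightarrow \tilde{X}$ and $i \colon Z \hookrightarrow X$) this is equivalent to $\RR \pi_* \OO_E(\ell E) = 0$, which I read off from the long exact sequence attached to
\begin{equation*}
0 \to \OO_{\tilde{X}}((k-1)E) \to \OO_{\tilde{X}}(kE) \to \OO_E(kE) \to 0
\end{equation*}
combined with two independent vanishings on $\tilde{X}$. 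First, the identity $\OO_{\tilde{X}}(kE) = \w_{\tilde{X}} \ot \pi^* \w_X^{-1} \ot \OO_{\tilde{X}}((k-m)E)$ exhibits $\OO_{\tilde{X}}(kE)$, for $k < m$, as $\w_{\tilde{X}}$ tensored with a $\pi$-ample line bundle, so relative Kawamata--Viehweg yields $R^i \pi_* \OO_{\tilde{X}}(kE) = 0$ for $i \geq 1$ and $k < m$, while the boundary case $k = m$ follows from the Grauert--Riemenschneider identity $\RR \pi_* \w_{\tilde{X}} = \w_X$, which holds because $X$ has rational singularities. Second, since $X$ is normal and $Z$ has codimension at least two, Hartogs' theorem gives $\pi_* \OO_{\tilde{X}}(kE) = \OO_X$ for every $k \geq 0$: any section of $\OO_{\tilde{X}}(kE)$ pushes down to a rational function on $X$ regular on $X \setminus Z$, hence everywhere regular by normality. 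Feeding both inputs into the long exact sequence collapses it to $\RR \pi_* \OO_E(kE) = 0$ for $1 \leq k \leq m$.

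Once this vanishing is in hand, the remainder is formal. The adjunction $\LL p^* \dashv \RR p_*$ together with the projection formula turn the vanishing into the semi-orthogonality of the blocks $\{\OO_E(iE) \ot \LL p^* \DB(Z)\}_{i=1}^m$, and the same computation applied to $\HH(\LL p^* H, \OO_E(iE) \ot \LL p^* G) = \HH(H, \RR p_* \OO_E(iE) \ot G) = 0$ shows $\LL p^* \DB(Z) \subseteq B_0$, yielding the nested containment. The main technical obstacle I anticipate is the Kawamata--Viehweg step itself, specifically the verification that $-E$ is genuinely $\pi$-ample (which rests on the irreducibility of $E$ and the structure of $\pi$ as the contraction of an extremal face), together with the admissibility arguments needed to ensure that $B_0$, defined as a left orthogonal, is a well-behaved triangulated subcategory of $\DB(E)$.
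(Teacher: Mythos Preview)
Your argument is correct and arrives at the same semi-orthogonal decomposition as the paper, but the route to the key vanishing $\RR p_* \OO_E(\ell E) = 0$ for $1 \le \ell \le m$ is genuinely different. The paper (this is the $n=1$ case of Proposition~\ref{keypropbis}, via Lemma~\ref{vanishing}) works entirely on $E$: adjunction gives $K_E = p^*(K_X|_Z) \ot \OO_E((m+1)E)$, and then relative Kawamata--Viehweg on $E$ handles the higher direct images while relative Grothendieck duality for the flat morphism $p$ with Gorenstein fibers handles $R^0$. You instead work upstairs on $\tilde{X}$: Kawamata--Viehweg and Grauert--Riemenschneider kill the higher $R^i \pi_* \OO_{\tilde X}(kE)$, Hartogs on the normal variety $X$ (using $\operatorname{codim} Z \ge 2$) forces $\pi_* \OO_{\tilde{X}}(kE) = \OO_X$ for all $k\ge 0$, and the standard short exact sequence then collapses. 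Your approach is arguably more elementary in that it avoids relative duality; the paper's approach has the advantage of being intrinsic to $E \to Z$ and of simultaneously delivering $\RR p_* \OO_E = \OO_Z$, which you need for the full faithfulness and admissibility of $p^* \DB(Z)$ but never explicitly establish. The two concerns you flag at the end are dealt with in the paper by, respectively, the blow-up description of $\pi$ (so $\OO(-E)$ is the relative $\OO(1)$) and the same Lemma~\ref{vanishing} applied with $k=0$.
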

 As a consequence of this proposition, we get a first mild generalization of the first part of Kuznetsov's theorem:
 
\begin{theo} \label{generalmild}
Let $X$ be an algebraic variety with Gorenstein rational singularities. Let $\pi : \tilde{X} \rightarrow X$ be a resolution of singularities such that the exceptional divisor $E$ of $\pi$ is irreducible, smooth and flat over $\pi(E)$. Then $X$ admits a categorical weakly crepant resolution of singularities.
\end{theo}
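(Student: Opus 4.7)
The plan is to reduce Theorem \ref{generalmild} immediately to the combination of the preceding Proposition and Kuznetsov's Theorem \ref{kuzmaintheo}; no genuinely new argument is needed, so my job is only to check that the output of the Proposition matches the input of Kuznetsov's theorem.

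First I would apply the Proposition to $\pi : \tilde{X} \to X$. The hypotheses there are exactly the geometric conditions we have in the theorem (Gorenstein rational singularities of $X$, together with $E$ irreducible, smooth and flat over $\pi(E)$), so the Proposition produces at once the three data needed by Kuznetsov:
\begin{itemize}
\item a positive integer $m$ with $K_{\tilde{X}} = \pi^* K_X \otimes \OO_{\tilde{X}}(mE)$,
\item a semi-orthogonal decomposition of Lefschetz type
\[
\DB(E) = \langle \OO_E(mE) \ot B_m,\; \ldots,\; \OO_E(E) \ot B_1,\; B_0 \rangle,
\]
\item the nested chain $\LL \pi^* \DP(\pi(E)) \subset B_m \subset \cdots \subset B_1 \subset B_0$.
\end{itemize}
These are verbatim the assumptions of the first (weakly crepant) half of Theorem \ref{kuzmaintheo}, so that result furnishes the desired categorical weakly crepant resolution of $X$ and the proof is complete.

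Consequently there is essentially no obstacle at the level of Theorem \ref{generalmild} itself: the theorem is a formal corollary once the Proposition is known. The actual work is displaced to the Proposition, where one must construct the subcategories $B_i$ and verify the Lefschetz-type semi-orthogonality. There the main obstacle will be establishing the nested inclusions $\LL \pi^* \DP(\pi(E)) \subset B_m \subset \cdots \subset B_0$ and the vanishing of the relevant $\mathrm{Ext}$-groups between the twisted blocks; this is where Kodaira's relative vanishing theorem (made applicable by the smoothness of $E$ and the flatness of $E \to \pi(E)$) together with the adjunction formula for $E \subset \tilde{X}$ and relative Serre duality will do the heavy lifting. But since the Proposition is assumed (and its proof is deferred to Proposition \ref{keypropbis} in the case $n=1$), the present theorem follows by direct citation.
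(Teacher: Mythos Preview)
Your proposal is correct and matches the paper's approach exactly: in the introduction the paper derives Theorem \ref{generalmild} precisely by observing that the preceding Proposition (the $n=1$ case of Proposition \ref{keypropbis}) supplies the Lefschetz data required by Theorem \ref{kuzmaintheo}, whence the weakly crepant categorical resolution follows. Your remarks on where the actual work lies (the vanishing lemmas via relative Kodaira--Viehweg and adjunction, carried out in the proof of Proposition \ref{keypropbis}) are also on the mark.
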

Note that all examples in Corollary \ref{kuzexample} satisfy the hypotheses of Theorem \ref{generalmild}.

\bigskip

\noindent Now, one remembers the strong version of Hironaka's theorem (\cite{hiro}) : any variety can be desingularized by a sequence of blow-ups such that every exceptional divisor is flat over its center of blowing-up and the total exceptional divisor of the resolution has (with its \underline{reduced} structure) simple normal crossings. So, one could hope to get a very far-reaching generalization of Kuznetsov's result : \textit{any Gorenstein variety with rational singularities admits a categorical weakly crepant resolution of singularities}.

\bigskip

Unfortunately, things are not so simple. Indeed, in order to construct a categorical crepant resolution starting from a sequence of blow-ups which desingularizes our variety, one needs strong compatibility conditions between the semi-orthogonal decompositions of the derived categories of the various exceptional divisors. Those compatibility conditions can be formulated at the categorical level (see Proposition \ref{keypropbis}). But one would rather like to know geometric situations where these compatibility conditions are satisfied. I have thus formalized the notion of \emph{wonderful resolution of singularities} (see definition \ref{wonderful} of the present paper), which applies to a sequence of blow-ups:
\begin{equation*}
X_n \rightarrow \cdots \rightarrow X_1 \rightarrow X,
\end{equation*}
giving a resolution of singularities of $X$. It is remarkable that this definition, which I made up to describe geometrically some compatibility conditions among the derived categories of the exceptional divisors of a resolution of singularities, happens to be the one which perfectly identifies the resolution process of the boundary divisor for the most basic wonderful compactifications of semi-simple linear algebraic groups (see \cite{procesi-decon} for details). With this notion in hand, quite technical but predictable computations yields the:

\begin{theo}[Main Theorem]
Let $X$ be an algebraic variety with Gorenstein rational singularities. Assume that $X$ has a wonderful resolution of singularities. Then $X$ admits a categorical weakly crepant resolution of singularities.
\end{theo}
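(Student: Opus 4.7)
The plan is to induct on the length $n$ of the wonderful sequence of blow-ups $X_n \to \cdots \to X_1 \to X$. The base case $n = 1$ is precisely Theorem \ref{generalmild}: a wonderful resolution of length one is a single blow-up whose irreducible smooth exceptional divisor is flat over its center, and the proposition preceding Theorem \ref{generalmild} already produces the semi-orthogonal decomposition of $\DB(E)$ with the Lefschetz-type filtration demanded by Kuznetsov's Theorem \ref{kuzmaintheo}, yielding a weakly crepant categorical resolution.

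For the inductive step, I would factor the resolution as the composition of the first blow-up $\pi_1 : X_1 \to X$ (along a smooth subvariety of $X$) with the tail $X_n \to \cdots \to X_1$. The first task is to verify, directly from the definition of a wonderful resolution, that the tail is itself a wonderful resolution of $X_1$ of length $n-1$ and that $X_1$ still has Gorenstein rational singularities, so that the inductive hypothesis applies to give a categorical weakly crepant resolution $\T_1 \hookrightarrow \DB(X_n)$ of $X_1$. The second task is to glue this datum with $\pi_1$ via the categorical variant of Kuznetsov's theorem stated in Proposition \ref{keypropbis}: one must exhibit a semi-orthogonal decomposition of $\DB(E_1)$, where $E_1 \subset X_1$ is the exceptional divisor of $\pi_1$, of the shape required by Theorem \ref{kuzmaintheo}, whose components are compatible with the admissible subcategory $\T_1 \hookrightarrow \DB(X_n)$ after restriction to the strict transform of $E_1$ in $X_n$.

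The hard part is precisely this compatibility. Relative Kodaira vanishing together with the projection and adjunction formulae supplies the semi-orthogonal decomposition of $\DB(E_1)$ itself, with the containment $\LL \pi_1^* \DB(\pi_1(E_1)) \subset B_m \subset \cdots \subset B_0$, exactly as in the base case. What is not automatic is that this filtration matches the filtration induced on the strict transform of $E_1$ in $X_n$ by $\T_1$: the iterated blow-ups of the tail cut $E_1$ along loci which must interact correctly with the Lefschetz decomposition of its fibers over $\pi_1(E_1)$. This is where the wonderful hypothesis enters in an essential way: its geometric axioms --- that each successive blow-up center meets the pre-existing exceptional divisors transversally (with simple normal crossings) and that the whole exceptional configuration follows a building-set combinatorial pattern, in the spirit of the De Concini--Procesi wonderful compactifications --- are engineered precisely so that the successive strict transforms restrict the relevant semi-orthogonal decompositions compatibly along each exceptional stratum. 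Once this matching is established by a careful bookkeeping of the strata and their normal data, Proposition \ref{keypropbis} applied to $\pi_1$ relative to $\T_1$ produces the desired categorical weakly crepant resolution of $X$ as an admissible subcategory of $\T_1 \subset \DB(X_n)$, closing the induction.
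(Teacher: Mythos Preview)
Your overall architecture---induct on the length $n$ of the wonderful tower, with $n=1$ being Theorem \ref{generalmild}---matches the paper. But the inductive step as you describe it has a real gap, and the paper organizes things differently precisely to avoid it.

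You propose to obtain by induction a weakly crepant categorical resolution $\T_1 \hookrightarrow \DB(X_n)$ of $X_1$, and then to ``glue'' this with $\pi_1 : X_1 \to X$ by invoking Theorem \ref{kuzmaintheo} (or what you call its categorical variant, Proposition \ref{keypropbis}). This does not work as stated. Theorem \ref{kuzmaintheo} takes as input a \emph{smooth} resolution $\tilde X \to X$ and a Lefschetz decomposition of $\DB(E)$ for a \emph{smooth} $E$; here $X_1$ and $E_1$ are both singular when $n \geq 2$, so the theorem simply does not apply to $\pi_1$. There is also no general mechanism in the paper for ``composing'' a categorical crepant resolution of $X_1$ with a further birational map $X_1 \to X$ to obtain one for $X$. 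Finally, Proposition \ref{keypropbis} is not a tool you can invoke relative to $\T_1$: it \emph{is} the technical statement whose inductive proof constitutes the content of the theorem.

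What the paper actually does is separate the two concerns. First, it proves Proposition \ref{keypropbis} by induction on $n$: this gives, all at once on the smooth variety $X_n$, a semi-orthogonal decomposition
\[
\DB(X_n) = \langle A_{1,0},\ldots,A_{1,m_1-1},\ldots,A_{n,m_n-1}, D_{X_n}\rangle
\]
with $(\pi_1\cdots\pi_n)^*\DP(X) \subset D_{X_n}$. The inductive hypothesis is \emph{strengthened} to include, for every $k$, an analogous decomposition of $\DB(E_k^{(n)})$ (the smooth total transform, not the singular $E_k$); this is what makes the orthogonality between the $A_{j,k_j}$ for different $j$ go through via base change along the transverse intersections $E_j^{(n)}\cap E_p^{(n)}$. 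Second, with the full decomposition in hand, the crepancy of $D_{X_n}$ is checked \emph{once}, non-inductively, by writing down the filtration of $\pi^!F$ by the line bundles $\OO_{X_n}\bigl((m_j-k_j)E_j^{(n)}+\sum_{t>j} m_t E_t^{(n)}\bigr)$ and observing that each successive quotient lands in the corresponding $A_{j,k_j}$. Your intuition that the wonderful axioms force the right compatibilities is correct, but the place to cash it in is the strengthened induction on the semi-orthogonal decompositions of the $\DB(E_k^{(n)})$, not an attempted application of Kuznetsov's theorem to the singular intermediate step.
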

At first glance, the notion of wonderful resolution of singularities seems to be quite restrictive and one could naively guess that there are too few examples of such resolutions. However, some reformulations of the work of Vainsencher \cite{vain} and Thaddeus \cite{thad} show that it is not the case. Indeed, we have the:

\begin{theo}[\cite{vain}, \cite{thad}]
All determinantal varieties (square as well as symmetric and skew-symmetric) admit wonderful resolution of singularities.
\end{theo}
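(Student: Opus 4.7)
The plan is to take the classical desingularization of each determinantal variety by iterated blow-ups along the strata of lower corank --- the construction of Vainsencher \cite{vain}, recast in a modern framework by Thaddeus \cite{thad} --- and to verify that it fits the definition of a wonderful resolution of singularities. I will treat the square case first and explain how the symmetric and skew-symmetric cases follow by the same inductive scheme.

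Let $M_r(m,n) \subset \mathrm{Hom}(\mathbb{C}^m,\mathbb{C}^n)$ denote the variety of linear maps of rank at most $r$, stratified by corank:
\begin{equation*}
\{0\} = M_0 \subset M_1 \subset \cdots \subset M_{r-1} \subset M_r.
\end{equation*}
Set $X_0 = M_r$ and define recursively $X_i \rightarrow X_{i-1}$ to be the blow-up of the strict transform of $M_i$ in $X_{i-1}$; then $X_{r-1} \rightarrow M_r$ is the classical space of complete collineations, and it is a smooth resolution of $M_r$ by \cite{vain}. The symmetric and skew-symmetric cases proceed identically with the corank strata of symmetric (resp. skew-symmetric) matrices, yielding the space of complete quadrics and the space of complete skew-forms respectively.

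To match the definition of a wonderful resolution, I would check step by step that at every stage $i$ the center $\overline{M}_i \subset X_{i-1}$ of the blow-up is smooth, that the exceptional divisor $E_i$ of $X_i \rightarrow X_{i-1}$ is irreducible, smooth, and flat over this center, and that the previously created exceptional divisors $E_0,\ldots,E_{i-1}$ meet $E_i$ transversally, so that the total exceptional divisor of $X_{r-1} \rightarrow M_r$ has simple normal crossings in its reduced structure. The decisive input is the local product description made explicit by Vainsencher and revisited by Thaddeus: \'etale-locally around a general point of $M_i \setminus M_{i-1}$, the pair $(M_r(m,n), M_i)$ is isomorphic to $M_i \times \bigl(M_{r-i}(m-i,n-i), \{0\}\bigr)$, so the iterated blow-up process at that point reduces to the analogous construction for the smaller determinantal variety of rank at most $r-i$, and the previously created exceptional divisors are transverse to the new stratum.

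The hard part is precisely this local normal form and its compatibility under induction on $r$. Once it is available, smoothness of the successive centers follows inductively (the strict transform of $M_i$ in $X_{i-1}$ is locally the product of a smooth factor with the first exceptional divisor of the Vainsencher tower of the smaller variety $M_{r-i+1}(m-i+1,n-i+1)$), flatness of $E_i$ over its center is automatic from the projective-bundle structure of a blow-up along a smooth subvariety, and the normal crossings property is inherited from the recursive decomposition. The symmetric and skew-symmetric varieties admit entirely analogous local product decompositions into smaller symmetric (resp. skew-symmetric) determinantal varieties, so the same induction applies verbatim. The theorem is thus a repackaging of \cite{vain} and \cite{thad} in the language of wonderful resolutions.
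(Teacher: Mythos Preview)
Your approach is essentially the same as the paper's: both reduce the statement to the results of Vainsencher and Thaddeus on complete collineations, quadrics, and skew-forms, and both identify the iterated blow-up along the rank strata as the candidate wonderful resolution. The paper is terser --- it simply cites Theorem~1 and Theorem~2.4 of \cite{vain} for smoothness of the centers, normal flatness, and items (1)--(4) of Definition~\ref{wonderful}, and Appendices~10--11 of \cite{thad} for the symmetric and skew-symmetric cases --- while you sketch the inductive mechanism (the local product decomposition) that underlies those citations.

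One small imprecision worth correcting: you write that ``flatness of $E_i$ over its center is automatic from the projective-bundle structure of a blow-up along a smooth subvariety.'' That is not quite right, because the ambient variety $X_{i-1}$ is in general still singular at stage $i$, so the exceptional divisor is the projectivized normal \emph{cone}, not a projective bundle. Normal flatness is exactly the non-automatic condition that this cone is flat over the center. In the determinantal case it holds precisely because of the local product decomposition you invoke, so your argument is sound; just delete the ``projective-bundle'' justification and let the local normal form carry the weight, as it does in Vainsencher's Theorem~2.4.
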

As a consequence, we get the:

\begin{cor}
All Gorenstein determinantal varieties (square as well as symmetric and skew-symmetric) admit categorical weakly crepant resolutions of singularities.
\end{cor}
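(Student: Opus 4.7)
The plan is essentially to combine the two preceding results in the excerpt, so the corollary is almost immediate once the hypotheses of the Main Theorem are checked on each family of determinantal varieties. I would structure the argument in three short steps.

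First, I would verify that the determinantal varieties under consideration have rational singularities. This is classical: each determinantal variety (square, symmetric, or skew-symmetric) is the image of a Kempf-type collapsing from the total space of a tautological vector bundle on a Grassmannian (or isotropic Grassmannian), and this collapsing map is a resolution of singularities whose higher direct images of the structure sheaf vanish by a Bott-type computation. Thus rational singularities hold in all three cases, independently of whether the variety is Gorenstein. The Gorenstein hypothesis itself is part of the statement of the corollary, so it does not need to be verified here; one simply observes that for generic square matrices this selects the corank-$c$ loci inside $M_{n \times n}$, for symmetric matrices it is automatic, and for skew-symmetric matrices it picks out the even corank strata, but this bookkeeping is not required for the logical structure of the proof.

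Second, I would invoke the theorem of Vainsencher and Thaddeus stated just above to produce, in each case, an explicit wonderful resolution of singularities
\begin{equation*}
X_n \rightarrow \cdots \rightarrow X_1 \rightarrow X,
\end{equation*}
obtained as a tower of blow-ups along smooth centers (the successive proper transforms of the smaller corank strata). No extra construction is needed; one merely quotes their result in the form recorded in the excerpt.

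Third, I would apply the Main Theorem directly to the wonderful resolution produced in step two. Since $X$ is Gorenstein with rational singularities and admits a wonderful resolution, the Main Theorem yields a categorical weakly crepant resolution of $\DB(X)$, which is exactly the conclusion of the corollary.

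The only place where a real difficulty could appear is, of course, inside the Main Theorem itself (verifying the compatibility of semi-orthogonal decompositions along the tower of exceptional divisors), and inside the Vainsencher--Thaddeus statement (checking that their iterated blow-up construction really fits the axioms of a wonderful resolution in the sense of Definition \ref{wonderful}). Both of these are handled upstream of this corollary, so at the level of the corollary itself there is no obstacle: the argument is a clean concatenation \emph{rational singularities $+$ wonderful resolution $\Longrightarrow$ weakly crepant categorical resolution}.
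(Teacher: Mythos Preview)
Your proposal is correct and matches the paper's approach: the corollary is stated there as an immediate consequence of the Main Theorem together with the Vainsencher--Thaddeus result, with the rationality of singularities supplied (in Example~\ref{coli}) by citing Weyman rather than via a Kempf-collapsing argument, but this is only a difference of reference. One small slip in your aside: the parity condition for Gorensteinness falls on the \emph{symmetric} determinantal varieties (it requires $n-r$ even), while the skew-symmetric ones are always Gorenstein---you have these swapped, though as you note this bookkeeping is irrelevant to the logic of the proof.
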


Let us now briefly indicate the plan of the paper. In section $2$, we give the definition of a wonderful resolution of singularities and study its basic cohomological properties. We also exhibit some examples of varieties which have a wonderful resolution of singularities. In section $3$, we prove the main theorem. This is the technical core of the paper. In section $4$, we discuss some minimality properties for categorical crepant resolutions of singularities and some existence problems related to prehomogeneous spaces.

\bigskip
\bigskip

\textbf{Aknowledgements} : I would like to thank Sasha Kuznetsov for many interesting discussions on
categorical resolutions of singularities and Christian Lehn for many helpful
comments on the first drafts of this paper. I would also like to thank Laurent
Manivel for his constant support and insightful criticism during the
preparation of this work.

\end{section}

\begin{section}{Wonderful resolutions of singularities}

We work over $\mathbb{C}$ the field of complex numbers. An
\textbf{algebraic variety} is a reduced algebraic scheme of finite type over
$\mathbb{C}$ (in particular it may be reducible). For any proper
morphism $f : X \rightarrow Y$ of schemes of finite type over $\mathbb{C}$, we
denote by $f_*$ the total derived functor $\RR f_* : \DB (X) \rightarrow \DB
(Y) $, by $f^*$ the total derived functor $\LL f^* : \mathrm{D^{-}}(Y)
\rightarrow \mathrm{D^{-}}(X)$ and by $f^!$ the right adjoint functor to $
\RR f_*$. In case we need to use
specific homology sheaves of these functors, we will denote them by $\Ri f_*,
\Li
f^*$ and $\Li f^!$.

 \begin{subsection}{Wonderful resolutions}
 
Let $Y \subset X$ be a closed
irreducible subvariety of $X$. We say that $Y$ is \textbf{ a
normally flat center in X} if the natural map:

$$ E \rightarrow Y $$
is flat, where $E$ is the exceptional divisor of the blow up of $X$ along $Y$.
Hironaka proved in \cite{hiro} that any algebraic variety can be desingularized
by a finite sequence of blow-ups along smooth normally flat centers.

\begin{exem} \upshape{ Let $W$ be a vector space of dimension at least $6$ and
let $X = \mathbb{P}(\{ \omega \in \bigwedge^{2}W,\, \operatorname{rank}(\omega)
\leq 4 \}$ be the $2$nd Pfaffian variety in $\bigwedge^{2}W$. Then $X$ is singular exactly along $\mathrm{Gr}(2,W) = \mathbb{P}(\{ \omega \in
\bigwedge^{2}W,\, \text{rank}(\omega) \leq 2 \}$. Here $\mathrm{Gr}(2,W)$ is a
smooth normally flat center for X. Indeed, if $E$ is the exceptional divisor of
the blow-up of $X$ along $\mathrm{Gr}(2,W)$, then $E$ is the flag variety
$\mathrm{Fl}(2,4,W)$ and the natural map onto $\mathrm{Gr}(2,W)$ is given by the
second projection. Obviously it is flat. See \cite{kuz}, section 8 for more
details.}
 
\end{exem}

Given $\mathrm{G}$ a semi-simple affine algebraic group, one often wants to find
a good equivariant compactification of $\mathrm{G}$. Equivariant
compactifications of $\mathrm{G}$ for
which the boundary
divisors have simple normal crossings are called \textit{wonderful} in the
literature (see \cite{hurug}, chapter $3$ for instance). One notices that the most basic wonderful compactifications we know are obtained by the following
procedure. Take $\overline{\mathrm{G}}$ be a naive \footnote{For instance, let $V$ be a linear representation of $\mathrm{G}$ and consider the closure of $\mathrm{G}$ in $\mathbb{P}(\mathrm{End}(V))$.} equivariant
compactification of $\mathrm{G}$. Then find an embedded resolution of the boundary divisor in
$\overline{\mathrm{G}}$ such that it's smooth model
has simple normal crossings with the exceptional divisors of the modification
of $\overline{\mathrm{G}}$. This embedded resolution is obtained by a
succession of blow-ups along smooth centers which satisfy nice intersection
properties. The following definition captures the most essential features of
this sequence of blow-ups.

\begin{defi}[Wonderful resolutions] 
\label{wonderful}

Let $X$ be an algebraic variety with Gorenstein singularities. For all
$n \geq 1$, we define a $n$-step \textbf{wonderful resolution of singularities}
in the following recursive way:

\begin{itemize}
 \item A $1$-step wonderful resolution is a single blow up:
 $$ \pi : \tilde{X} \rightarrow X,$$
 over a smooth normally flat center $Y \subset X$, such that $\tilde{X}$ and
the exceptional divisor $E \subset \tilde{X}$ are smooth.

\item For $n \geq 2$, a $n$-step wonderful resolution of X is a sequence of
blow-ups:
$$ X_n \stackrel{\pi_n}\rightarrow X_{n-1} \ldots
X_1 \stackrel{\pi_1}\rightarrow X_0 = X $$ over smooth normally flat centers
$Y_{k+1} \subset X_k$ such that:

\begin{enumerate}
 \item all the $X_k$ are Gorenstein,
 
 \item the map $\pi_2 \ldots \pi_n: X_n \rightarrow X_1$ is a $(n-1)$-step
wonderful resolution of $X_1$,

 \item the intersection of $Y_{k+1}$ with $E^{(k)}_1$ is proper and
smooth (where
$E^{(k)}_1$ is the total transform of
$E_1$, the exceptional divisor of $\pi_1$, with respect to $\pi_{2} \ldots
\pi_{k}$, for $k > 1$),

 \item the map $\pi_2 \ldots \pi_n|_{E^{(n)}_1} : E^{(n)}_1 \longrightarrow
E_1$ is a $(n-1)$-step wonderful resolution of singularities.

\end{enumerate}

\end{itemize}

\end{defi}

As far as I know, the term \textit{wonderful resolution} first appeared in
\cite{fu-chap} where it was used to describe the resolution of indeterminacies
of a stratified Mukai flop.

\begin{exem}[Determinantal varieties]
\label{coli}

\upshape{Let $E$ a vector space of dimension $n$. Let $2 \leq r \leq n$
and let $X$ be the subvariety of $\mathbb{P}(\mathrm{End}(E))$ defined by
the vanishing of the minors of size $r$. It is well known that $X$ is Gorenstein
with rational singularities (see \cite{weyman}, corollary $6.1.5$). We define
$X_1$ to be the blow up:
\begin{equation*}
 \pi_1 : X_1 \rightarrow X,
\end{equation*}
of $X$ along $Y_1$, where $Y_1$ is the subvariety of
$\mathbb{P}(\mathrm{End}(E))$ defined by the minors of size $2$. For $3 \leq k
\leq r-1$, we define recursively $X_k$ to be the blow-up:

\begin{equation*}
 \pi_k : X_k \rightarrow X_{k-1},
\end{equation*}
of $X_{k-1}$ along $Y_k$, where $Y_k$ is the strict transform through $\pi_1
\ldots \pi_{k-1}$ of the subvariety of $\mathbb{P}(\mathrm{End}(E))$ defined by
the vanishing of the minors of rank $k$. By theorem $1$ of \cite{vain}, we know that $X_{r-1}$ is smooth and
that all $Y_k$ for $1 \leq k \leq r-1$ are smooth. Moreover, theorem $2.4$ of
\cite{vain} shows that the $Y_k$ are
normally flat centers and that item $1$, $2$, $3$ and $4$ in the definition of a
wonderful resolution are satisfied for the following resolution of $X$:

\begin{equation*}
 X_{r-1} \stackrel{\pi_{r-1}}\rightarrow  \ldots
X_1 \stackrel{\pi_1}\rightarrow X
\end{equation*}
Thus, $X$ has Gorenstein rational singularities and admits a wonderful
resolution of singularities.

Let $X_r^{[sym]} \subset
\mathbb{P}(\mathrm{S}^2(E))$ (resp. $X_r^{[skew]} \subset
\mathbb{P}(\bigwedge^2(E))$ denotes the determinantal variety defined by the
vanishing of the minors of size $r$ of the generic $n \times n$ symmetric (resp.
skew-symmetric) matrix with linear entries. If $n-r$ is even (resp. no
conditions), we know by \cite{weyman}, corollary $6.3.7$ (resp. proposition
$6.4.3$) that $X_r^{[sym]}$ (resp. $X_r^{[skew]}$) is Gorenstein with rational
singularities. Moreover, the appendices $10$ and $11$ of
\cite{thad} show that $X_r^{[sym]}$ and $X_r^{[skew]}$ also admit wonderful
resolutions of singularities.}
 
 \end{exem}

\begin{exem}[Secant variety of $\mathbb{OP}^2$]
\upshape{
Let $X = \mathbb{OP}^2 = \mathrm{E_6/P_{\alpha_1}} \subset \mathbb{P}^{26}$ be
the Cayley
plane into his highest weight embedding, where $\mathrm{P_{\alpha_1}}$ is the
maximal parabolic associated to the root $\mathrm{\alpha_1}$ of the
$\mathrm{E_6}$ root system. The Cayley plane can also be recovered
as the scheme defined by the $2 \times 2$ minors of the generic hermitian $3
\times 3$ octonionic matrix:

\begin{equation*}
M = \begin{pmatrix}
a_1 & a_2 & a_3 \\
\overline{a_2} & a_4 & a_5 \\
\overline{a_3} & \overline{a_5} & a_6 \\
 \end{pmatrix}, \text{with}\,\, a_i \in \mathbb{O} \, \text{and
such that} \,\, \leftexp{t}{\overline{M}} = M, 
\end{equation*}
where $\overline{a_i}$ is the octonionic conjugate of $a_i \in
\mathbb{O}$.
 
Let $S(X)$ be the secant variety of $X$ inside $\mathbb{P}^{26}$. It can be
seen as the scheme defined by the determinant of the above matrix $M$. The
variety $S(X)$ is a cubic hypersurface (so it is Gorenstein) which is singular
exactly along $\mathbb{OP}^2$. Let
\begin{equation*}
\pi : \widetilde{S(X)} \rightarrow S(X) 
\end{equation*}
be the blow up of $S(X)$ along $X$. It is smooth and the exceptional divisor
$E$ is isomorphic to $\mathrm{E_6/Q_{\alpha_1, \alpha_5}}$ (where
$\mathrm{Q_{\alpha_1, \alpha_5}}$ is the parabolic associated to the roots
$\mathrm{\alpha_1}$ and $\mathrm{\alpha_5}$). It is a fibration into smooth
$8$-dimensional quadrics over $X$. As a consequence, the map $\pi :
\widetilde{S(X)} \rightarrow S(X)$ is a
wonderful resolution of singularities. We refer to \cite{zak} ch. III and
\cite{manifaen} for more details on the beautiful geometric and categorical
features of the Cayley plane.
}
\end{exem}

\end{subsection}

\begin{subsection}{Wonderful resolutions and singularities of the intermediate
divisors}

The above examples also suggest that the definition of a wonderful resolution
imposes strong conditions on the singularities of the
exceptional divisor $E_{1}^{(k)}$. Indeed, the following three propositions
show that they must be similar to the singularities of $X$.

\begin{prop} \label{rational} Let $X$ be an algebraic variety with Gorenstein
and rational
singularities. Let
$X_n \stackrel{\pi_n}\rightarrow X_{n-1} \ldots X_1 \stackrel{\pi_1}\rightarrow
X_0 = X$ be a wonderful resolution of singularities of $X$. Then, for all $1
\leq
k \leq n$, the varieties $X_k$ and the exceptional divisors $E_1^{(k)}$ have
Gorenstein rational singularities. 

\end{prop}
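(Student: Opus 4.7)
The plan is to prove the statement by induction on $n$, the length of the wonderful resolution. The base case $n=1$ is immediate, since by definition $X_1$ and $E_1$ are both smooth, hence Gorenstein with rational singularities.

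For the inductive step, the strategy is to exploit conditions (2) and (4) of Definition \ref{wonderful}, which supply respectively an $(n-1)$-step wonderful resolution of $X_1$ and an $(n-1)$-step wonderful resolution of $E_1$. If one shows that both $X_1$ and $E_1$ have Gorenstein rational singularities, then the inductive hypothesis applied to these two shorter resolutions yields Gorenstein rational singularities for every $X_k$ with $k \geq 2$ and every $E_1^{(k)}$ with $k \geq 2$. The whole inductive step therefore reduces to a single-blow-up statement: if $X$ has Gorenstein rational singularities and $Y \subset X$ is smooth and normally flat with Gorenstein blow-up $\widetilde{X}$, then $\widetilde{X}$ and its exceptional divisor $E$ both have Gorenstein rational singularities.

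Gorensteinness in this key step is automatic: $\widetilde{X}$ is Gorenstein by assumption, and $E$ is a Cartier divisor in $\widetilde{X}$ (being the exceptional divisor of a blow-up), hence Gorenstein by adjunction. Rationality of $\widetilde{X}$ reduces, by Kempf's criterion together with Cohen-Macaulayness, to the vanishings $\pi_*\OO_{\widetilde{X}} = \OO_X$ and $R^i\pi_*\OO_{\widetilde{X}} = 0$ for $i > 0$. My approach to these is to filter $\OO_{\widetilde{X}}$ by the short exact sequences
\[ 0 \to \OO_{\widetilde{X}}(-(m+1)E) \to \OO_{\widetilde{X}}(-mE) \to \OO_E(-mE) \to 0, \]
use $\pi_*\OO_{\widetilde{X}}(-mE) = I_Y^m$, and exploit the smooth normal flatness of $Y$: the map $E \to Y$ is flat over the smooth base $Y$ (in fact \'etale-locally a trivial projective bundle whose fiber is the projectivization of the normal cone of $Y$ in $X$), so standard projective-bundle cohomology yields the required vanishings. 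Rationality of $E$ follows from the same circle of ideas, applied directly to the flat, smooth-base morphism $E \to Y$.

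The main obstacle is this cohomological analysis. Normal flatness is precisely the condition that guarantees $E \to Y$ is flat with a well-behaved, constant-fiber structure over the smooth base $Y$, and combined with the Gorenstein hypothesis on $\widetilde{X}$ (which is built into the definition of a wonderful resolution), it should pin down both $\widetilde{X}$ and $E$ as Gorenstein rational. Once this key single-blow-up statement is established, the inductive bookkeeping via conditions (2) and (4) is routine.
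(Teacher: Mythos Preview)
Your inductive framework via conditions (2) and (4) is sound, but the key single-blow-up step contains a genuine gap. The sentence ``Rationality of $\widetilde{X}$ reduces, by Kempf's criterion together with Cohen--Macaulayness, to the vanishings $\pi_*\OO_{\widetilde{X}} = \OO_X$ and $R^i\pi_*\OO_{\widetilde{X}} = 0$ for $i > 0$'' has the logic backwards: those vanishings, for the map $\pi\colon\widetilde{X}\to X$, are the criterion for \emph{$X$} to have rational singularities, assuming $\widetilde{X}$ is a resolution of $X$---but $X$ already has rational singularities by hypothesis, and $\widetilde{X}=X_1$ is not smooth when $n\geq 2$. To prove that $\widetilde{X}$ has rational singularities you must produce a resolution \emph{of $\widetilde{X}$} and compute higher direct images from \emph{above}, not push $\OO_{\widetilde{X}}$ down to $X$. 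The same objection applies verbatim to $E=E_1$: the flat map $E\to Y$ is not birational, and flatness over a smooth base says nothing about rational singularities of the source. (As a secondary matter, the claim that $E\to Y$ is ``\'etale-locally a trivial projective bundle'' is unjustified: normal flatness gives only that this map is flat, and its fibres are projectivized tangent cones, in general neither projective spaces nor locally constant in moduli.)

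The paper closes this gap by using the resolutions that the wonderful structure itself supplies: $X_n\to X_k$ resolves $X_k$ (condition~2, iterated) and $E_1^{(n)}\to E_1^{(k)}$ resolves $E_1^{(k)}$ (condition~4). For the $X_k$, one argues via discrepancies: Gorenstein rational implies canonical, so all discrepancies of $X_n\to X$ are nonnegative; transversality (condition~3) forces the discrepancy along $E_1$ to be the same at every intermediate stage, whence ${\pi_1}_*\OO_{X_1}=\OO_X$, and comparison with $R(\pi_1\cdots\pi_n)_*\OO_{X_n}=\OO_X$ gives rationality of $X_1$ and then inductively of every $X_k$. For the $E_1^{(k)}$, one uses a base-change formula on the fibre square with sides $i_1^{(n)}\colon E_1^{(n)}\hookrightarrow X_n$ and $i_1^{(k)}\colon E_1^{(k)}\hookrightarrow X_k$ (both lci of the same codimension) to reduce the computation of $R{\Pi_k}_*\OO_{E_1^{(n)}}$ to the already-established $R(\pi_{k+1}\cdots\pi_n)_*\OO_{X_n}=\OO_{X_k}$.
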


\begin{proof} The fact that the $X_k$ are Gorenstein is in the
definition of a wonderful resolution of singularities. Let $\pi_1 : X_1
\rightarrow X$ be the blow-up along $X_1$ with exceptional divisor $E_1$ and
let $E_{\alpha_1},\ldots, E_{\alpha_d}$ be the irreducible components of $E_1$.
We also denote by $E_{\alpha_i}^{(k)}$ the total transform of $E_{\alpha_i}$
under $\pi_2 \ldots \pi_k$. Since the $Y_k$ meet transversally the
$E_{\alpha_i}^{(k)}$ for all $ \geq 2$, the coefficients appearing in front of
the $E_{\alpha_i}^{(k)}$ in the expression of $K_{X_k}$ are the same as the
coefficients in front of the $E_{\alpha_i}$ in the expression of $K_{X_1}$. Now
$X$ is Gorenstein with rational singularities, hence it has canonical
singularities by \cite{kollar} corollary $11.13$. Thus, the coefficients in
front of the $E_{\alpha_i}^{(k)}$ in the expression of $K_{X_k}$ are positive,
and so are the coefficient in front of the $E_{\alpha_i}$ in the expression of
$K_{X_1}$. As a consequence, we have ${\pi_1}_* \OO_{X_1} = \OO_{X}$. Since we
also have ${\pi_1 \ldots \pi_n}_* \OO_{X_n} = \OO_{X}$, we find that $X_1$ has
rational singularities. An obvious induction shows that all $X_k$ have
rational singularities.

The divisors $E_1^{(k)}$ are obviously Gorenstein, as they are Cartier divisors
inside Gorenstein varieties. The main point of the proposition is thus to show
that the $E_1^{(k)}$ have rational singularities for any $k \geq 1$.

\bigskip

Item $3$ of definition \ref{wonderful} implies that
for any $k \geq 2$ the map
\begin{equation*}
 \pi_k|_{E_1^{(k)}} : E_1^{(k)} \rightarrow E_1^{(k-1)}
\end{equation*}
is the blow-up along $Y_{k} \cap E_1^{(k-1)}$ (we recall that $E_1^{(k)}$ is
the total transform of $E_1$ through $\pi_2 \ldots \pi_k$). Thus, we deduce
from item $4$ of definition \ref{wonderful} that for any $k \geq 1$, the map

\begin{equation*}
 \Pi_{k} = \pi_{k+1} \ldots \pi_{n}|_{E_1^{(n)}} : E_1^{(n)} \rightarrow
E_1^{(k)}
\end{equation*}
is a resolution of singularities. Hence, to prove that $E_1^{(k)}$ has rational
singularities we only have to compute $(\Pi_{k})_* \OO_{E_1^{(n)}}$.

For all $1 \leq k \leq n$, we have a fibered diagram:
\begin{equation*}
 \xymatrix{
 E_1^{(n)} \ar[rr]^{i_1^{(n)}} \ar[dd]_{\Pi_k} & & X_n \ar[dd]^{\pi_{k+1} \ldots
\pi_n} \\
& & \\
E_1^{(k)} \ar[rr]^{i_1^{(k)}}& & X_k}
\end{equation*}

The maps $i_1^{(n)}$ and $i_1^{(k)}$ are locally complete intersection
embeddings
and $\operatorname{codim} E_1^{(k)} \subset X_k = \operatorname{codim} E_1^{(n)}
\subset X_n$ so that we have the commutation of derived functors (see
\cite{kuz4}, corollary
$2.27$):
$$  (i_1^{(k)})^*  (\pi_{q+1} \ldots \pi_n)_* \mathcal{O}_{X_n} = 
(\Pi_k)_* (i_1^{(n)})^* \mathcal{O}_{X_n},$$
that is:
$$  (\Pi_k)_* \mathcal{O}_{E_1^{(n)}} = \mathcal{O}_{E_1^{(k)}}.$$ So
$E_1^{(k)}$ has rational singularities.

\end{proof}

The next proposition (\ref{connexe}) is important, because for any wonderful resolution of a variety $X$ with rational Gorenstein singularities:
\begin{equation*}
X_n \stackrel{\pi_n}\rightarrow X_{n-1} \ldots X_1
\stackrel{\pi_1}\rightarrow X_0 = X,
\end{equation*}
it allows to produce a simple formula relating $K_{X_k}, \pi_{k}^*K_{X_{k-1}}$
and $E_k$ for any $1 \leq k \leq n$.  The (positive) coefficient appearing
in front of $E_k$ in the expression of $K_{X_k}$ can be interpreted as a kind of multiplicity of $Y_k$ in $X_{k-1}$. Proposition \ref{canonical} then shows that the multiplicity of $Y_{k+1}$ in $X_{k}$ is equal to the multiplicity of
$Y_{k+1} \cap E_1^{(k)}$ in $E_1^{(k)}$ for any $k \geq 1$.

\begin{prop}
\label{connexe}
Let $X$ be an algebraic variety with Gorenstein and rational
singularities. Let
$X_n \stackrel{\pi_n}\rightarrow X_{n-1} \ldots X_1 \stackrel{\pi_1}\rightarrow
X_0 = X$ be a wonderful resolution of singularities of $X$. Then the
exceptional
divisor $E_1$ of $\pi_1 : X_1 \rightarrow X_0$ is irreducible.
\end{prop}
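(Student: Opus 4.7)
The plan is to deduce irreducibility from normality plus connectedness, via the standard fact that a connected normal variety is irreducible (its irreducible components, being disjoint by normality, must coincide with its connected components).

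First I would apply Proposition \ref{rational} to $k=1$, so that $E_1 = E_1^{(1)}$ has Gorenstein rational singularities. In particular $E_1$ is normal, and therefore it suffices to prove that $E_1$ is connected. Note that Proposition \ref{rational} was established without using irreducibility of $E_1$ (its proof explicitly allows the decomposition $E_1 = E_{\alpha_1} \cup \cdots \cup E_{\alpha_d}$), so there is no circularity.

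Next I would exploit the identity $(\pi_1)_* \OO_{X_1} = \OO_X$, which was already produced inside the proof of Proposition \ref{rational}. Since $X$ is normal (being Gorenstein with rational singularities), Stein factorization of the proper birational morphism $\pi_1$ shows that every fiber of $\pi_1$ is geometrically connected. For $y \in Y_1$ the set-theoretic fiber $\pi_1^{-1}(y)$ is contained in $E_1$, so it equals the fiber at $y$ of the restriction $\pi_1|_{E_1} : E_1 \to Y_1$. Hence $E_1 \to Y_1$ is a proper, surjective morphism with connected fibers.

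Finally, I would invoke the normally flat hypothesis: the map $E_1 \to Y_1$ is flat, and $Y_1$ is irreducible (in the definition of normally flat center, $Y$ is a closed irreducible subvariety). If $E_1$ had two distinct connected components $C_1, C_2$, each $C_i$ would be open and closed in $E_1$; its image in $Y_1$ would then be open (by flatness) and closed (by properness of $\pi_1|_{E_1}$), hence all of $Y_1$ by irreducibility. Consequently every fiber of $E_1 \to Y_1$ would meet both $C_1$ and $C_2$, contradicting connectedness of the fibers established above. Thus $E_1$ is connected, and combined with normality this yields irreducibility.

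The only potentially delicate point is checking that Proposition \ref{rational} is available — i.e.\ that its proof does not secretly use what we are about to prove — but inspection shows the argument there only needs the positivity of discrepancies coming from the canonical singularities of $X$, not any knowledge about the components of $E_1$. Beyond that, the argument is a routine combination of Zariski's connectedness theorem, Stein factorization, and openness of flat morphisms.
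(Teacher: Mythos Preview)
Your proof is correct and takes a genuinely different route from the paper. Both arguments finish the same way --- invoke Proposition~\ref{rational} to conclude that $E_1$ has rational singularities, hence is normal, and then reduce irreducibility to connectedness. The divergence is in how connectedness is obtained. The paper appeals to Schaub's theorem that the tangent cone of a Cohen--Macaulay variety is connected in codimension~$1$, together with Hironaka's description of the tangent cone along a normally flat center, to deduce that $E_1$ itself is connected in codimension~$1$; reducibility would then force two components to meet, contradicting normality. You instead use only Zariski's connectedness theorem (normality of $X$ plus properness and birationality of $\pi_1$ give connected fibers), the flatness of $E_1 \to Y_1$, and the irreducibility of the center $Y_1$ built into the definition of a normally flat center. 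Your approach is more elementary --- it avoids both Schaub's result and the tangent-cone machinery --- while the paper's argument yields the stronger intermediate statement that $E_1$ is connected in codimension~$1$, though that extra strength is not exploited elsewhere. Your check that Proposition~\ref{rational} does not rely on the present proposition is accurate: its proof explicitly decomposes $E_1$ into components $E_{\alpha_1}, \ldots, E_{\alpha_d}$ and proceeds via base change, so no circularity arises.
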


\begin{proof} For $y \in Y_1$, denote by $\mathcal{C}_{y}(X)$ the tangent cone
to $X$ at $y$ and let $\rho_Y : \mathcal{C}_{Y_1}(X) \rightarrow Y_1$ be the
tangent
cone to $X$ along $Y_1$. Since $Y_1$ is a smooth normally flat center in $X$, we
know by \cite{hiro}, theorem 2, p.195, that $\mathcal{C}_{y}(X)$ is a
cone with vertex $T_{Y_1,y}$ over $\rho_{Y_1}^{-1}(y)$. Since $X$ is
Cohen-Macaulay,
the main result of \cite{schaub} implies that the cone $\mathcal{C}_{y}(X)$
is connected in codimension $1$ (which means that one needs to subtract from
$\mathcal{C}_{y}(X)$ a variety of codimension at most $1$ to disconnect it).
As $\dim \rho_{Y_1}^{-1}(y) \geq 1$ for all $y \in Y$, we get that  $\rho_{Y_1}^{-1}(y)$ is connected in codimension $1$ for all $y
\in Y$. The map $\rho_{Y_1}$ being flat, this finally implies that $E_1 =
\mathbb{P}(\mathcal{C}_{Y_1}(X))$ is connected in codimension $1$.

Assume that $E_1$ is reducible. By the above discussion, we know that each
irreducible component of $E_1$ meets another component in codimension $1$. Since $\dim E_1 \geq 2$, we get that $E_1$ is not normal, which contradicts the fact that it has rational singularities (see \ref{rational}).
\end{proof}

\bigskip

In the following, for $2 \leq k \leq n$, we denote by $E_{1k}^{(k)}$ the
intersection
of $E_1^{(k)}$ with $E_k$, the exceptional divisor of $\pi_k : X_k \rightarrow
X_{k-1}$. By item $3$ of definition \ref{wonderful}, $E_{1k}^{(k)}$ is also the
exceptional divisor of the blow-up
$ E_1^{(k)} \rightarrow E_1^{(k-1)}$.

\begin{prop} \label{canonical} Let $2\leq k\leq n$. If $m_k$ is the non-negative
integer such
that $K_{X_{k}} = \pi_{k}^{*}K_{X_{k-1}} \ot \OO_{X_k}(m_k E_k)$, then we
have:
\begin{equation*}
K_{E^{(k)}_1} = \pi_k|_{E_1^{(k)}}^{*}K_{E_1^{(k-1)}} \ot \OO_{E_1^{(k)}}(m_{k}
E_{1k}^{(k)}).
\end{equation*}
\end{prop}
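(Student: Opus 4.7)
The strategy is to apply the adjunction formula for Cartier divisor embeddings twice---once on $X_k$ with $E_1^{(k)}$ and once on $X_{k-1}$ with $E_1^{(k-1)}$---and to bridge the two using the assumed relation between $K_{X_k}$ and $\pi_k^* K_{X_{k-1}}$. All the varieties in sight are Gorenstein: the $X_j$ by item $1$ of Definition \ref{wonderful}, and the $E_1^{(j)}$ as Cartier divisors inside Gorenstein varieties. Hence adjunction yields
\[ K_{E_1^{(j)}} = \bigl(K_{X_j} \otimes \OO_{X_j}(E_1^{(j)})\bigr)\big|_{E_1^{(j)}} \quad \text{for } j \in \{k-1, k\}. \]

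Substituting the hypothesis $K_{X_k} = \pi_k^* K_{X_{k-1}} \otimes \OO_{X_k}(m_k E_k)$ into the $j=k$ formula, we obtain
\[ K_{E_1^{(k)}} = \bigl(\pi_k^* K_{X_{k-1}} \otimes \OO_{X_k}(E_1^{(k)})\bigr)\big|_{E_1^{(k)}} \otimes \OO_{E_1^{(k)}}(m_k E_{1k}^{(k)}), \]
using $\OO_{X_k}(E_k)|_{E_1^{(k)}} = \OO_{E_1^{(k)}}(E_{1k}^{(k)})$, which is just the definition $E_{1k}^{(k)} = E_k \cap E_1^{(k)}$. Comparing with the desired formula, the remaining task is to identify the first factor with $(\pi_k|_{E_1^{(k)}})^* K_{E_1^{(k-1)}}$. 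By the $j=k-1$ adjunction formula together with the compatibility of restriction to a divisor with pullback of line bundles, this reduces to the identity of Cartier divisors $\pi_k^* E_1^{(k-1)} = E_1^{(k)}$ on $X_k$.

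This last identity is the only non-routine point. For a blow-up with smooth center $Y_k$ one has $\pi_k^* D = \widetilde{D} + (\operatorname{mult}_{Y_k} D)\, E_k$ for any Cartier divisor $D$ on $X_{k-1}$, where $\widetilde{D}$ denotes the strict transform. Applied with $D = E_1^{(k-1)}$, the strict transform equals $E_1^{(k)}$ by the description of $\pi_k|_{E_1^{(k)}}$ as the blow-up of $E_1^{(k-1)}$ along $Y_k \cap E_1^{(k-1)}$ recalled in the proof of Proposition \ref{rational}. The needed vanishing $\operatorname{mult}_{Y_k} E_1^{(k-1)} = 0$ follows from item $3$ of Definition \ref{wonderful}: the proper intersection condition forces $Y_k \not\subset E_1^{(k-1)}$, so a local equation of $E_1^{(k-1)}$ is a unit at the generic point of $Y_k$ and its order of vanishing there is zero. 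This multiplicity verification is where the geometric content of the proposition lives; once it is in place, the claimed formula for $K_{E_1^{(k)}}$ follows by direct substitution.
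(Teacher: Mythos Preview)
Your argument is correct and follows the same route as the paper: apply adjunction on $X_{k-1}$ and $X_k$, insert the hypothesis $K_{X_k}=\pi_k^*K_{X_{k-1}}\otimes\OO_{X_k}(m_kE_k)$, and reduce to the two identifications $\OO_{X_k}(E_k)|_{E_1^{(k)}}=\OO_{E_1^{(k)}}(E_{1k}^{(k)})$ and $\pi_k|_{E_1^{(k)}}^*\OO_{E_1^{(k-1)}}(E_1^{(k-1)})=\OO_{E_1^{(k)}}(E_1^{(k)})$. The only difference is that the paper asserts the second identity without comment, whereas you spell it out via $\pi_k^*E_1^{(k-1)}=E_1^{(k)}$ and the proper-intersection condition in item~3 of Definition~\ref{wonderful}; this extra detail is welcome but not a departure in method.
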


Note that the integer $m_k$ is well defined because $X_k \rightarrow X_{k-1}$
is a blow-up whose exceptional divisor is irreducible. Moreover, it is non-negative because Gorenstein rational singularities are canonical (see \cite{kollar}, corollary $11.13$).

\begin{proof}

The adjunction formula implies that $ K_{E_1^{(k)}} = K_{X_k}|_{E_1^{(k)}} \ot
\mathcal{O}_{E_1^{(k)}}(E_1^{(k)})$ and that $K_{E_1^{(k-1)}} =
K_{X_{k-1}}|_{E_1^{(k-1)}} \ot \mathcal{O}_{E_1^{(k-1)}}(E_1^{(k-1)})$.

Now, we tensor the formula $K_{X_k} = \pi_k^* K_{X_{k-1}} \ot \OO_{X_k}(m_k
E_k)$ by
$\mathcal{O}_{E_1^{(k)}}$ and taking into account the fact that
$E_k|_{E_1^{(k)}} = E_{1k}^k$ and that ${\pi_k}|_{E_1^{(k)}}^*
\mathcal{O}_{E_1^{(k-1)}}(E_1^{(k-1)}) = \mathcal{O}_{E_1^{(k)}}(E_1^{(k)})$, we
get the announced formula.

\end{proof}

\end{subsection}
\end{section}

\begin{section}{Categorical crepant resolutions of singularities and wonderful resolutions}
\begin{subsection}{Categorical crepant resolution of singularities}
 
Now we come back to the notion of categorical resolutions of
singularities. Let us recall some basic facts about derived categories of coherent sheaves on an algebraic variety.

\bigskip

Let $X$ be an algebraic variety, then $\DB(X)$ denotes the derived category of
bounded complexes of coherent sheaves on $X$. The subcategory of bounded
complexes of
locally free sheaves is denoted by $\DP(X)$. Recall that if $X$ is
smooth, then these two categories are equivalent. Let $A$ be a full subcategory
of $\DB(X)$ and $\alpha : A \hookrightarrow \DB(X)$ the embedding functor. We
say that $A$ is \textbf{admissible} if $\alpha$ has a left and a right adjoint. This is equivalent to asking that there exist semi-orthogonal decompositions:

\begin{equation*}
 \DB(X) = \langle A, \leftexp{\perp}{A} \rangle \,\, \text{and} \,\, \DB(X) =
\langle A^{\perp}, A \rangle,
\end{equation*}

where: 

\begin{equation*}
 \leftexp{\perp}{A} = \{ F \in \DB(X),\,\, \HH(F,a) = 0,\,\, \text{for all} \,
a \in A \},
\end{equation*}

and

\begin{equation*}
 A^{\perp} =  \{ F \in \DB(X),\,\, \HH(a,F) = 0,\,\, \text{for all} \,
a \in A \}.
\end{equation*}

We refer to \cite{kuz}, section $2$ for more details on semi-orthogonal
decompositions. Recall the definition of a categorical crepant resolution of singularities we gave in the introduction (see also \cite{kuz}):

\begin{defi}
Let $X$ be an algebraic variety with Gorenstein and rational singularities. A
\emph{categorical resolution of singularities} of $X$ is a
triangulated category $\T$ with a functor $\RR {\pi_{\T}}_* : \T \rightarrow
\DB(X)$ such that:
\begin{itemize}
\item there exists a resolution of singularities $\pi : \tilde{X} \rightarrow
X$ such that $\d : \T \hookrightarrow \DB(\tilde{X})$ is admissible and $\RR
{\pi_{\T}}_* = \RR \pi_* \circ \d$,

\item we have $\LL \pi^* \DP(X) \subset \T$ and for all $\F \in \DP(X)$:

\begin{equation*}
\RR {\pi_{\T}}_* \LL \pi_{\T}^* \F \simeq \F,
\end{equation*}
where $\LL \pi_{\T}^*$ is the left adjoint to $\RR {\pi_{\T}}_*$.
\end{itemize}
\bigskip

\noindent If for all $\F \in \DP(X)$, there is a quasi-isomorphism:
\begin{equation*}
\LL \pi_{\T}^* \F \simeq \LL \pi_{\T}^! \F,
\end{equation*}
where $\LL \pi_{\T}^!$ is the right adjoint of $\RR {\pi_{\T}}_*$, we say that
$\T$ is \emph{weakly crepant}.
\end{defi}

Now we can state our main theorem.

\begin{theo}[Existence of categorical weakly crepant resolutions]
\label{maintheo}

Let $X$ be an algebraic variety with Gorenstein
and rational
singularities. Assume that $X$ has a wonderful
resolution of singularities. Then $X$ admits a categorical weakly crepant
resolution of singularities.
 
\end{theo}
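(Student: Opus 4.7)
My plan is to argue by induction on the number $n$ of blow-ups in the wonderful resolution, carrying through the induction not just a weakly crepant categorical resolution of the ambient variety but also the ``Kuznetsov-type'' semi-orthogonal decomposition of the (categorical resolution of the) first exceptional divisor that the subsequent step needs.

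For the base case $n=1$, the exceptional divisor $E_1$ is smooth (by the definition of a $1$-step wonderful resolution), irreducible (by Proposition \ref{connexe}), and flat over $Y_1$ (by normal flatness). These are exactly the hypotheses of the proposition stated in the introduction (the $n=1$ case of Proposition \ref{keypropbis}), so we obtain a semi-orthogonal decomposition
$$\DB(E_1) = \langle \OO_{E_1}(m_1 E_1) \ot B_{m_1}, \ldots, \OO_{E_1}(E_1) \ot B_1, B_0 \rangle,$$
with $\LL \pi_1^* \DB(Y_1) \subset B_{m_1} \subset \cdots \subset B_0$. Theorem \ref{kuzmaintheo} then produces the desired weakly crepant categorical resolution; this coincides with Theorem \ref{generalmild}.

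For the inductive step, assume the theorem for all wonderful resolutions of length less than $n$. By item $(4)$ of definition \ref{wonderful}, the restriction $\pi_2 \cdots \pi_n|_{E_1^{(n)}} : E_1^{(n)} \to E_1$ is an $(n-1)$-step wonderful resolution, and by Proposition \ref{rational} the target $E_1$ has Gorenstein rational singularities. The inductive hypothesis therefore provides a weakly crepant categorical resolution $\T_{E_1} \subset \DB(E_1^{(n)})$ of $E_1$, equipped with a semi-orthogonal decomposition
$$\T_{E_1} = \langle \OO_{E_1^{(n)}}(m_1 E_1^{(n)}) \ot B_{m_1}, \ldots, \OO_{E_1^{(n)}}(E_1^{(n)}) \ot B_1, B_0 \rangle,$$
whose pieces are nested $B_{m_1} \subset \cdots \subset B_0$ and all contain the pullback of $\DB(Y_1)$. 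Feeding this structure into the categorical strengthening of Kuznetsov's theorem (Proposition \ref{keypropbis}, designed to accept a categorical resolution of a possibly singular exceptional divisor) glues $\T_{E_1}$ with the appropriate admissible subcategories of $\DB(X_n)$ to yield the sought $\T_X$. The geometric linchpin is Proposition \ref{canonical}: it ensures that the discrepancy $m_k$ of $E_k \subset X_k$ matches that of $E_{1k}^{(k)} \subset E_1^{(k)}$, so that the twists by $\OO(k E_1^{(n)})$ used to stratify $\T_{E_1}$ align, level by level, with the twists by $\OO(k E_k)$ demanded on $X_n$ by the Kuznetsov construction.

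The main obstacle is the simultaneous verification, at every level of the induction, of the nested inclusions together with the semi-orthogonality and admissibility of all the subcategories involved. These ultimately rest on the projection formula, on base change through the fibered squares already exploited in the proof of Proposition \ref{rational}, and on relative Kodaira vanishing for each blow-up $\pi_k$ — tools available precisely because the axioms of definition \ref{wonderful} force the centers $Y_{k+1}$ to meet the divisors $E_1^{(k)}$ properly and smoothly, while Proposition \ref{rational} keeps the intermediate divisors Gorenstein and rational throughout the recursion.
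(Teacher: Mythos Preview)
Your outline has the right spirit---induction on $n$, exploiting the recursive item~(4) of Definition~\ref{wonderful}---but there is a genuine gap in the inductive logic.

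You claim that the inductive hypothesis, applied to the $(n-1)$-step wonderful resolution $E_1^{(n)}\to E_1$, yields a Lefschetz decomposition
\[
\T_{E_1} = \langle \OO_{E_1^{(n)}}(m_1 E_1^{(n)}) \ot B_{m_1}, \ldots, \OO_{E_1^{(n)}}(E_1^{(n)}) \ot B_1, B_0 \rangle
\]
with respect to $\OO_{E_1^{(n)}}(E_1^{(n)})$. But this line bundle is the conormal bundle of $E_1^{(n)}$ inside $X_n$; it is \emph{extrinsic} to the resolution $E_1^{(n)}\to E_1$ and does not appear anywhere in your stated inductive hypothesis. What the hypothesis actually gives, when applied to $E_1^{(n)}\to E_1$, is (i) a weakly crepant $\T_{E_1}$, and (ii) a Lefschetz-type datum for the \emph{first exceptional divisor of that resolution}, namely $E_{1,2}^{(2)}$, twisted by $\OO(E_{1,2}^{(n)})$---not a Lefschetz decomposition of $\T_{E_1}$ itself. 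So the induction does not close.

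You also misread Proposition~\ref{keypropbis}: it is not a black box ``designed to accept a categorical resolution of a possibly singular exceptional divisor.'' There is no analogue of Kuznetsov's Theorem~\ref{kuzmaintheo} or Lemma~\ref{divisor} in the paper that takes as input a Lefschetz decomposition of an admissible subcategory $\T_{E_1}\subset\DB(E_1^{(n)})$ rather than of the full $\DB(E)$ for a smooth Cartier divisor $E$. The paper's Proposition~\ref{keypropbis} is instead a single explicit statement about semi-orthogonal decompositions of $\DB(X_n)$ \emph{and of $\DB(E_k^{(n)})$ for every $k$}, and it is this second clause---carried for all $k$, not just $k=1$---that makes the induction close. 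The genuinely new work at each step (``Step~1'' in the paper) is to show that the pieces $C^k_{k,r_k}$, which are twisted by powers of the extrinsic $\OO(E_k^{(n)})$, slot in to the right of the pieces $B^k_{p,k_p}$ furnished by induction; this is where the vanishing Lemma~\ref{vanishing} and Corollary~\ref{corvanishing} are spent, and where Proposition~\ref{canonical} enters. Your sketch gestures at these ingredients but attaches them to the wrong step: they are needed to \emph{produce} the Lefschetz decomposition you assumed, not to glue it afterwards.
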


An analogous notion of "non commutative" crepant resolution of singularities was also studied for determinantal varieties in \cite{blv}, \cite{blvII} and \cite{weyII}. In these works, non-commutative crepant resolutions are proved to exist for some determinantal varieties which already admit "geometric" crepant resolution of singularities.

\end{subsection}
\begin{subsection}{Lefschetz semi-orthogonal decompositions}
  
A key point in the proof of the theorem is the notion of dual Lefschetz
decomposition which was introduced in \cite{kuz2}.

\begin{defi} Let $X$ be an algebraic variety and $L$ a line bundle on $X$. Let
$\T \subset \DB(X)$ be a full admissible subcategory such that for all $T \in
\T$, we have $T \ot L \in \T$. We say that $\T$ admits a \textbf{dual Lefschetz
decomposition} with respect to $L$, if there exists a semi-orthogonal
decomposition:

$$ \T= \langle B_m \ot L^{\ot m}, B_{m-1} \ot L^{\ot m-1},\ldots, B_0
\rangle,$$ where $B_m \subset B_{m-1} \subset \ldots \subset B_0$ are full
admissible
subcategories of $\T$.

\end{defi}

\begin{exem} \upshape{The following semi-orthogonal decomposition:
 
 $$ \DB(\mathbb{P}^n) = \langle
\mathcal{O}_{\mathbb{P}^n}(-n),
\mathcal{O}_{\mathbb{P}^n}(-n+1),\ldots,\mathcal{O}_{
\mathbb{P}^n} \rangle,$$ is a dual Lefschetz decomposition of
$\DB(\mathbb{P}^n)$ with respect to
$\mathcal{O}_{\mathbb{P}^n}(-1)$.}

\end{exem}

The following lemma is proposition $4.1$ of \cite{kuz}.

\begin{lem} 
\label{divisor}
Let $X$ be a smooth algebraic variety and let $ i : E \hookrightarrow X$ a
Cartier
divisor such that we have a dual Lefschetz decomposition:

$$ \DP (E) = \langle B_m \ot \OO_{E}(mE) \ldots B_1 \ot \OO_{E}(E),\, B_0
\rangle,$$ with $B_m \subset \ldots \subset B_1 \subset B_0$. Then we have a
semi-orthogonal decomposition:

$$ \DB (X) = \langle i_*(B_m \ot \OO_{E}(E)) \ldots i_*(B_1 \ot \OO_{E}(E)),\,
A_0(X) \rangle,$$ where $A_0(X) = \{ F \in \DB (X),\, i^*F \in B_0 \}$.

\end{lem}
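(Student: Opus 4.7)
Plan: I would follow Kuznetsov's approach (Lemma~4.1 of \cite{kuz}, see also \cite{kuz2}). The central computational input is that since $E$ is a Cartier divisor in the smooth variety $X$, the embedding $i$ is regular of codimension one, and for every $F \in \DB(E)$ one has
\begin{equation*}
i^* i_* F \;\cong\; F \oplus F \ot \OO_E(-E)[1].
\end{equation*}
Combined with adjunction this produces the Hom formula
\begin{equation*}
\HH_X(i_* F_1, i_* F_2) \;\cong\; \HH_E(F_1, F_2) \oplus \HH_E(F_1, F_2 \ot \OO_E(E))[-1],
\end{equation*}
which reduces any Hom between the blocks $i_*(B_k \ot \OO_E(kE))$ on $X$ to pairs of Homs between the layers of the dual Lefschetz decomposition on $E$, at the cost of one extra twist by $\OO_E(E)$ in the second summand.

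First, I would verify that $i_*$ embeds each block $B_k \ot \OO_E(kE)$ fully faithfully into $\DB(X)$ for $k \geq 1$: applying the Hom formula and cancelling the common twist, the correction summand becomes $\HH_E(B_k,\, B_k \ot \OO_E(E))[-1]$, which by the nesting $B_k \subset B_1 \subset B_0$ is a Hom from the layer $B_0$ to the leftier layer $B_1 \ot \OO_E(E)$ and hence vanishes by the dual Lefschetz semi-orthogonality. Second, I would check pairwise semi-orthogonality of the blocks $i_*(B_j \ot \OO_E(jE))$ and $i_*(B_k \ot \OO_E(kE))$ for $m \geq k > j \geq 1$: the two summands of the Hom formula reduce, after cancelling common twists, to $\HH_E(B_j,\, B_k \ot \OO_E((k-j)E))$ and $\HH_E(B_j,\, B_k \ot \OO_E((k-j+1)E))[-1]$ respectively, and using $B_j \subset B_0$ together with $B_k \subset B_{k-j+1}$ places each of these in the forbidden direction of the Lefschetz decomposition on $E$, so both vanish. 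Third, orthogonality to $A_0(X)$ is immediate from adjunction: for $F \in A_0(X)$ and $G \in B_k$ with $k \geq 1$,
\begin{equation*}
\HH_X\bigl(F,\, i_*(G \ot \OO_E(kE))\bigr) \;\cong\; \HH_E\bigl(i^* F,\, G \ot \OO_E(kE)\bigr),
\end{equation*}
and since $i^* F \in B_0$ while the target lies in the $k$-th Lefschetz layer with $k \geq 1$, this Hom vanishes.

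The main obstacle is the generation step. Given $F \in \DB(X)$, one must build a filtration whose graded pieces lie in the prescribed blocks. The strategy is to peel off the blocks $i_*(B_m \ot \OO_E(mE)), \ldots, i_*(B_1 \ot \OO_E(E))$ one at a time from left to right; each peeling is available because the preceding three steps together with admissibility of $B_k$ in $\DP(E)$ yield admissibility of the corresponding block in $\DB(X)$, hence a projection functor. After the $m$ peelings one must show that the residue $F'$ satisfies $i^* F' \in B_0$, i.e.\ lies in $A_0(X)$. This is the technical heart of the argument and is carried out by induction on $m$: at each step, the identity $i^* i_* = \mathrm{id} \oplus (- \ot \OO_E(-E))[1]$ allows one to track how the correction term interacts with the Lefschetz layers, and one verifies that the inductive structure is preserved throughout the peeling. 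The first three steps are essentially book-keeping with the Hom formula and the Lefschetz semi-orthogonality relations; only this last step requires genuine work.
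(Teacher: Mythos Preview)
The paper does not give its own proof of this lemma: it is simply stated with the remark ``The following lemma is proposition~4.1 of \cite{kuz}.'' Your proposal is therefore not competing with any argument in the paper; you are sketching Kuznetsov's original proof, and your outline follows his approach closely.

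One genuine technical slip: the identity $i^* i_* F \cong F \oplus F \ot \OO_E(-E)[1]$ is not correct as a direct sum. What one actually has for a regular codimension-one embedding is an exact triangle
\begin{equation*}
F \ot \OO_E(-E)[1] \longrightarrow i^* i_* F \longrightarrow F \longrightarrow,
\end{equation*}
and correspondingly the Hom formula is an exact triangle (or long exact sequence), not a splitting. This does not damage your semi-orthogonality and full-faithfulness arguments, since in each case you show the correction term vanishes and then the triangle collapses to an isomorphism; but in the generation step, where you invoke ``the identity $i^* i_* = \mathrm{id} \oplus (- \ot \OO_E(-E))[1]$'' to track layers through the peeling, the non-splitness matters and the bookkeeping has to be done with triangles rather than summands. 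With that correction your plan is the standard one.
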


The technical tool for the proof of the main theorem is the following
proposition. We first need some notation. Let $X$ be an algebraic variety with
Gorenstein and canonical singularities. Let

\begin{equation*}
X_n \stackrel{\pi_n}\rightarrow X_{n-1} \ldots
X_1 \stackrel{\pi_1}\rightarrow X_0 = X,
\end{equation*} be a wonderful resolution of $X$, which is a succession of
blow-ups along the
smooth normally flat centers $Y_t \subset X_{t-1}$ for $t=1 \ldots n$. For any
such $t$, we have a diagram:
\begin{equation*}
 \xymatrix{
 E_t^{(n)} \ar[rr]^{i_t^{(n)}} \ar[dd]_{q_t} & & X_n
\ar[dd]^{\pi_{t} \ldots
\pi_n} \\
& & \\
Y_t \ar[rr] & & X_{t-1}}
\end{equation*}
We recall that $E_t^{(n)}$ is the total transform of $E_t \subset X_t$ through
$\pi_{t+1}
\ldots \pi_n$ and that $q_t$ is a flat projection
with smooth fibers. We also have the formula $K_{X_t} = \pi_t^{*} K_{X_{t-1}}
+ m_t E_t$. For any $1 \leq j \leq n$ and for any $0 \leq k_j \leq m_j-1$, we
define the subcategories of $\DB (X_n)$:
\begin{equation*}
A_{j,k_j} = (i_j^{(n)})_* \left[ q_j^{*} \DB (Y_j) \ot
\OO_{E_j^{(n)}} \left( (m_j-k_j) E_j^{(n)}
+ \sum_{t=j+1}^{n} m_t E_t^{(n)} \right) \right],
\end{equation*}

\begin{prop} \label{keyprop} With the above
hypotheses and notations, we have a
semi-orthogonal decomposition:

\begin{equation*}
\DB (X_n) = \langle A_{1,0}, \ldots, A_{1,m_1-1}, \ldots, A_{j,k_j}, \ldots,
A_{n,m_n-1},\, D_{X_n} \rangle,
\end{equation*}
where $D_{X_n}$ is the left orthogonal to the full admissible subcategory
generated by the $A_{j,k_j}$ for $0 \leq k_j \leq m_j-1$ and $1 \leq j \leq n$. Moreover, we have the inclusion $(\pi_1
\ldots \pi_n)^{*} \DP(X) \subset D_{X_n}$.

\end{prop}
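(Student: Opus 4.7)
The natural strategy is induction on $n$, combining Lemma \ref{divisor} with the recursive structure of Definition \ref{wonderful}. For the base case $n = 1$: by Proposition \ref{connexe} the exceptional divisor $E_1 \subset X_1$ is irreducible, and since it is smooth and flat over $Y_1$, the (unlabeled) proposition from the introduction yields a dual Lefschetz decomposition of $\DB(E_1)$ with respect to $\OO_{E_1}(E_1)$ whose nested blocks $B_k$ all contain $q_1^* \DP(Y_1)$. Feeding this into Lemma \ref{divisor} gives the claimed decomposition of $\DB(X_1)$ with $D_{X_1} = A_0(X_1)$, and the inclusion $\pi_1^* \DP(X) \subset D_{X_1}$ follows by base change: $i_1^* \pi_1^* \F = q_1^* (\pi_1|_{Y_1})^* \F$ lies in $q_1^* \DB(Y_1) \subset B_0$.

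For the inductive step I would exploit both recursive clauses of Definition \ref{wonderful}. By item (4), the restricted morphism $\Pi_1 := \pi_2 \cdots \pi_n|_{E_1^{(n)}} : E_1^{(n)} \rightarrow E_1$ is itself an $(n-1)$-step wonderful resolution of $E_1$, and Proposition \ref{canonical}, applied at every level, ensures that its exceptional multiplicities coincide with $m_2, \ldots, m_n$. Combining the base-case Lefschetz decomposition of $\DB(E_1)$ with the induction hypothesis applied to $\Pi_1$, I would assemble a dual Lefschetz decomposition of $\DB(E_1^{(n)})$ with respect to $\OO_{E_1^{(n)}}(E_1^{(n)})$: the outer $E_1$-grading provides the $m_1$-step Lefschetz structure, and inside each block the wonderful decomposition of $\Pi_1$ supplies nested subcategories encoding the contributions of $E_2^{(n)}, \ldots, E_n^{(n)}$, twisted by $\sum_{t \geq 2} m_t E_{1t}^{(n)}$ exactly so as to match the formula defining $A_{j, k_j}$ after pushforward along $i_{1, n}$.

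I would then apply Lemma \ref{divisor} to the Cartier divisor $E_1^{(n)} \subset X_n$: the $m_1$ leftmost blocks unfold into $A_{1, 0}, \ldots, A_{1, m_1 - 1}$ together with contributions of the $A_{j, k_j}$ for $j \geq 2$, and item (3) of Definition \ref{wonderful} -- that $Y_t$ meets $E_1^{(t-1)}$ transversally -- ensures that each such $A_{j, k_j}$ is listed only once, since the nesting $B_k^{(n)} \subset B_{k-1}^{(n)}$ collapses onto these blocks instead of repeating them. Setting $D_{X_n}$ to be the left orthogonal of the category generated so far completes the semi-orthogonal decomposition. To verify $(\pi_1 \cdots \pi_n)^* \DP(X) \subset D_{X_n}$, I would use Grothendieck-Verdier duality with the cumulative canonical formula $K_{X_n} = (\pi_1 \cdots \pi_n)^* K_X + \sum_{t=1}^n m_t E_t^{(n)}$ to reduce the vanishing of $\HH_{X_n}(G, (\pi_1 \cdots \pi_n)^* \F)$, for $G$ a generator of any $A_{j, k_j}$ and $\F \in \DP(X)$, to Kodaira-type vanishing on $E_j^{(n)}$; the strict bounds $0 \leq k_j \leq m_j - 1$ then make the controlling line bundle strictly negative in the $E_j$-direction and force the Ext groups to vanish.

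The main obstacle will be step two: gluing the Lefschetz decomposition of $\DB(E_1)$, whose twist $\OO_{E_1}(E_1)$ is native to $E_1$, with the wonderful decomposition coming from $\Pi_1$, whose blocks naturally carry twists by the $E_{1t}^{(n)}$ for $t \geq 2$, into a single genuine dual Lefschetz decomposition of $\DB(E_1^{(n)})$ relative to $\OO_{E_1^{(n)}}(E_1^{(n)})$ with properly nested blocks $B_{m_1}^{(n)} \subset \cdots \subset B_0^{(n)}$. This rests on the rigid parallelism -- guaranteed by items (3) and (4) of Definition \ref{wonderful} together with Proposition \ref{canonical} -- between the combinatorial data of multiplicities and transversalities for the two wonderful resolutions $X_n \to X_1$ and $E_1^{(n)} \to E_1$.
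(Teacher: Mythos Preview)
Your inductive plan contains a genuine gap. The crucial misconception is in your second paragraph: you propose to apply Lemma~\ref{divisor} \emph{once}, to the single Cartier divisor $E_1^{(n)} \subset X_n$, and to extract from the resulting Lefschetz blocks not only $A_{1,0},\ldots,A_{1,m_1-1}$ but also ``contributions of the $A_{j,k_j}$ for $j\geq 2$''. This cannot work. Every object in the blocks produced by Lemma~\ref{divisor} applied to $i_1^{(n)}:E_1^{(n)}\hookrightarrow X_n$ is of the form $(i_1^{(n)})_*(\text{something on }E_1^{(n)})$ and hence is scheme-theoretically supported on $E_1^{(n)}$. But $A_{j,k_j}$ for $j\geq 2$ consists of pushforwards from $E_j^{(n)}$, which is \emph{not} contained in $E_1^{(n)}$; the two divisors only meet along the proper intersection $E_{1,j}^{(n)}$. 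So no dual Lefschetz decomposition of $\DB(E_1^{(n)})$, however cleverly assembled, can account for the $A_{j,k_j}$ with $j\geq 2$ after a single application of Lemma~\ref{divisor}.

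The paper circumvents this by proving the stronger Proposition~\ref{keypropbis}, which carries a semi-orthogonal decomposition of $\DB(E_k^{(n)})$ for \emph{every} $k$ through the induction. In the inductive step one applies Lemma~\ref{divisor} to each $E_j^{(n)}\subset X_n$ separately, getting admissibility and the within-$j$ orthogonality of the $A_{j,k_j}$. The cross-orthogonality $\HH(A_{p,k_p},A_{j,k_j})=0$ for $j<p$ is then checked directly: one uses the exact cartesian square on $E_{j,p}^{(n)}=E_j^{(n)}\cap E_p^{(n)}$ to rewrite $(i_j^{(n)})^*(i_p^{(n)})_*$ as $(j_{j,p}^{(n)})_*(j_{p,j}^{(n)})^*$, recognises the result as the block $B^j_{p,k_p}$ inside $\DB(E_j^{(n)})$, and invokes the semi-orthogonality $\HH(B^j_{p,k_p},C^j_{j,k_j})=0$ already established in that decomposition. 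The latter vanishing is the real work (``Step~1'' in the paper) and is obtained from Serre duality combined with Lemma~\ref{vanishing} and Corollary~\ref{corvanishing}; your final paragraph gestures at Kodaira-type vanishing but not at this base-change reduction, which is the missing structural idea.
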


We postpone the proof of the proposition and we will show that it implies that
$D_{X_n}$ is a categorical weakly crepant resolution of singularities of $X$.

\bigskip

Let $\delta : D_{X_n} \rightarrow \DB (X_n)$ be the fully faithful admissible
embedding and denote by $\pi$ the resolution $X_n \rightarrow X$. Since $\pi^*
\DP (X) \subset D_{X_n}$, the only thing left to prove is the crepancy of $\d
\pi^*$, that is $\d^{*} \pi^{*}(F) = \d^{!} \pi^{!}(F)$, for all $F \in \DP
(X)$. Recall that
\begin{equation*}
\pi^!(F) = \pi^*(F) \ot \pi^*({K_X}^{-1})\ot K_{X_n} = \pi^*(F) \ot
\OO_{X_n}(\sum_{t=1}^{n} m_t E_t^{(n)}),
\end{equation*}
for any $F \in \DP(X)$.

Now, since the functor $\d$ is fully faithful, the equality $\d^{*} \pi^{*}(F) =
\d^{!} \pi^{!}(F)$ is equivalent to $\d(\d^*
\pi^*(F)) =
\d (\d^! \pi^!(F))$, for any $F \in \DP(X)$. As $\pi^* \DP (X) \subset D_{X_n}$,
we have $\d(\d^* \pi^*(F)) = \pi^*(F)$, for all $F \in \DP (X)$. We are going to
show that $\d (\d^! \pi^!(F)) = \pi^*(F)$ for any $F \in \DP(X)$. For $1
\leq j \leq n$ and $0 \leq k_j \leq m_j-1$, we have exact
sequences:

\begin{equation*}
\begin{split}
& 0 \rightarrow \OO_{X_n} \left((m_j-k_j-1)E_1^{(n)} + \sum_{t=j+1}^{n} m_t
E_t \right) \rightarrow  \OO_{X_n} \left((m_j-k_j)E_j^{(n)} + \sum_{t=j+1}^{n}
m_t E_t \right) \rightarrow \\
& (i_j^{(n)})_* \OO_{E_j^{(n)}} \left((m_j-k_j) E_j^{(n)} + \sum_{t=j+1}^{n}
m_tE_t \right) \rightarrow 0,\\
\end{split}
\end{equation*}

So for each $F \in \DP (X)$, we deduce a long sequence of triangles:

\begin{equation*}
\xymatrix@C=10pt{ 
\pi^*(F)  \ar[rr]  & & F_{n,m_n-1}
\ar[ld] \ar[r] & \ldots \ar[rr] & & F_{j,k_j}
\ar[ld] \ar[r] & \ldots \ar[r] &  F_{1,1} \ar[rr] & & F_{1,0} \ar[ld]  \\
  & \F_{n,m_n-1} \ar[lu] & \ldots  & &  \F_{j,k_j} \ar[lu] & \ldots &  &  &
\F_{1,0} \ar[lu] & \\
} 
\end{equation*}

where:

\begin{equation*}
F_{j,k_j} = \pi^* F \ot \OO_{X_n}\left((m_j-k_j) E_j^{(n)} + \sum_{t=j+1}^{n}
m_tE_t^{(n)} \right), 
\end{equation*}

and

\begin{equation*}
\F_{j,k_j} =  (i_j^{(n)})_*
(i_j^{(n)})^* \left[ \pi^* F \ot \OO_{X_n} \left((m_j-k_j) E_j^{(n)} +
\sum_{t=j+1}^{n}
m_t
E_t^{(n)} \right) \right], 
\end{equation*}

for $0 \leq k_j \leq m_j-1$ and $1 \leq j \leq n$.

\bigskip

Since $\d : D_{X_n} \hookrightarrow \DB (X_n)$ is fully faithful and
admissible, it is well known that $\d (\d^! \pi^!(F))$ is the
$D_{X_n}$-component of
$\pi^! (F) = \pi^*(F) \ot \OO_{X_n}(\sum_{t=1}^{n} m_t E_t^{(n)})$ in the
semi-orthogonal decomposition of $\DB (X_n)$ given by proposition \ref{keyprop}
(see \cite{kuz} section $2$ for more details on semi-orthogonal decompositions). Consider the fibered diagram:
\begin{equation*}
 \xymatrix{
 E_j^{(n)} \ar[rr]^{i_j^{(n)}} \ar[dd]_{q_j} & & X_n \ar[dd]^{\pi_{j} \ldots
\pi_n} \ar@/^6pc/[dddd]^{\pi} \\
& & \\
Y_j \ar[rr]^{i_j} & & X_{j-1} \ar[dd]^{\pi_1 \ldots \pi_{j-1}} \\
& & \\
 & & X}
\end{equation*}

Since $(i_j^{(n)})^{*}(\pi^*(F)) = q_j^*((\pi_1
\ldots \pi_{j-1} i_j)^{*}(F)) \in q_j^*(\DB (Y_j))$, we have
$\mathcal{F}_{j,k_j} \in A_{j,k_j}$, for $1 \leq j \leq n$ and $0 \leq k_j \leq
m_j-1$. As a consequence, the
above sequence of triangles shows that $\d(\d^{!} \pi^{!}(F)) = \pi^*(F)$ and we
are done.
 
\end{subsection}

\begin{subsection}{Some vanishing lemmas}

Before diving into the proof of proposition \ref{keyprop}, we need a
vanishing result which will be very useful.

\begin{lem} \label{vanishing} Let $f : Z \rightarrow S $ be a flat and
projective morphism such that $Z$ has Gorenstein rational
singularities. Assume that there exists a relatively anti-ample divisor $E
\subset Z$, a line
bundle $F$ on $S$ and a
positive integer $r \geq 1$ such that:
$$ K_Z = f^*F \ot \OO_{Z}((r+1)E).$$

Then we have the vanishing:

$$\Ri f_* \mathcal{O}_{Z}(kE) = 0$$
for any $i > 0$ if $k \leq r$ 
and for any $i < \dim Z - \dim S$ if $k \geq 1$.
As a consequence, we have:
$$ \Hi (X,f^*A \ot \mathcal{O}_{X}(kE)) = 0$$
for any vector bundle $A$ on $S$ for any $i \geq 0$ if $1 \leq k \leq r$.
\end{lem}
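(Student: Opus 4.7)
The plan is to prove the two cohomology vanishings separately---using relative Kawamata--Viehweg vanishing for the first and then deriving the second from it by relative Grothendieck--Serre duality---and to conclude the final cohomological statement via the projection formula and the Leray spectral sequence.

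For $\Ri f_* \OO_Z(kE) = 0$ with $i > 0$ and $k \leq r$, the starting point is to rewrite
$$\OO_Z(kE) \simeq K_Z \ot \OO_Z((k-r-1)E) \ot f^* F^{-1}$$
using the hypothesis $K_Z = f^*F \ot \OO_Z((r+1)E)$. For $k \leq r$ the line bundle $L := \OO_Z((k-r-1)E) \ot f^* F^{-1}$ is $f$-ample, since $-E$ is $f$-ample and $f^*F^{-1}$ is $f$-numerically trivial. Because $Z$ has Gorenstein rational---thus canonical, hence klt---singularities (as already recalled via \cite{kollar}, Corollary 11.13, in the proof of Proposition \ref{rational}), relative Kawamata--Viehweg vanishing applies and gives $\Ri f_*(K_Z \ot L) = 0$ for $i > 0$, which is exactly the claim.

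For $\Ri f_* \OO_Z(kE) = 0$ with $i < d := \dim Z - \dim S$ and $k \geq 1$, the plan is to invoke relative duality. Since $f$ is flat and projective and $Z$ is Gorenstein, the morphism $f$ is Gorenstein of relative dimension $d$, the relative dualizing sheaf $\w_{Z/S}$ is a line bundle, and $f^!\OO_S \simeq \w_{Z/S}[d]$. Grothendieck--Serre duality then identifies the vanishing of $\Ri f_* \OO_Z(kE)$ with the vanishing of $R^{d-i} f_*(\OO_Z(-kE) \ot \w_{Z/S})$. From the hypothesis, $\w_{Z/S} \ot \OO_Z(-kE)$ is isomorphic to $\OO_Z((r+1-k)E)$ tensored with a line bundle pulled back from $S$, so the projection formula reduces the second vanishing to $R^{d-i} f_* \OO_Z((r+1-k)E) = 0$ for $d-i > 0$. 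Since $k \geq 1$ forces $r+1-k \leq r$, this is precisely what the first part delivers.

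For the final consequence, when $1 \leq k \leq r$ both vanishings apply simultaneously: the first handles $i > 0$, the second handles $i < d$, and $d \geq 1$ (the existence of a relatively anti-ample divisor forces positive relative dimension). Thus $\Ri f_* \OO_Z(kE) = 0$ for every $i \geq 0$, so $\RR f_* \OO_Z(kE) \simeq 0$. The projection formula yields $\RR f_*(f^*A \ot \OO_Z(kE)) \simeq A \ot \RR f_* \OO_Z(kE) \simeq 0$ for any vector bundle $A$ on $S$, and the Leray spectral sequence forces $\Hi(Z, f^*A \ot \OO_Z(kE)) = 0$ for all $i \geq 0$. The main technical point I expect to be delicate is the precise formulation of relative Serre duality when $S$ is not assumed smooth, but this is sidestepped by working throughout with the relative dualizing sheaf $\w_{Z/S}$, whose explicit expression in terms of $\w_S$ is irrelevant since only the pullback factor enters the projection formula.
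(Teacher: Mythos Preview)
Your proof is correct and follows essentially the same route as the paper: relative Kawamata--Viehweg vanishing for the first claim, relative Grothendieck--Serre duality (the paper cites Kleiman's version for flat morphisms with Gorenstein fibers) combined with the first claim for the second, and the projection formula plus the Leray spectral sequence for the final consequence. Your write-up is slightly more explicit in reducing the second vanishing to the first via $\omega_{Z/S}\otimes\OO_Z(-kE)\simeq f^*(\text{line bundle})\otimes\OO_Z((r+1-k)E)$ with $r+1-k\le r$, but the logic is identical.
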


\begin{proof} By hypotheses, we have $K_{Z} = f^*F \ot \OO_{Z}((r+1)E)$, the
variety $Z$ has rational singularities and $E$ is relatively
anti-ample so that by Kawamata-Viehweg relative vanishing (see \cite{KMM},
theorem $1.2.5$) we have
$$\Ri f_* \mathcal{O}_{Z}(kE) = 0$$
for any $i > 0$ if $k \leq r$. 

Again by hypotheses, we know that $Z$ is Gorenstein and that $f$ is flat.
This implies that all the fibers are pure $d$-dimensional
Gorenstein schemes, where $d = \dim Z - \dim S$. As a consequence, we can apply
the relative duality for a flat morphism with Gorenstein fibers
(see \cite{kleiman}, theorem $20$) and we find

\begin{equation*}
\Ri f_* (\OO_{Z}(-kE) \ot K_{Z/S}) =  \Hh_{\OO_S} (\mathrm{R^{d-i}}f_*
\OO_{Z}(kE), \OO_{S}).
\end{equation*}
As a consequence, we have
\begin{equation*}
\Ri f_* \OO_{Z}(kE) = 0,
\end{equation*}
for any $i < d$ if $k \geq 1$.

The last vanishing
$$\Hi (X,f^*A \ot \mathcal{O}_{X}(kE)) = 0$$ for any vector bundle $A$ on $S$
and any $i \geq 0$ if $1 \leq k \leq r$ is then a direct consequence of the
projection formula and Leray's spectral sequence.
\end{proof}

\bigskip

Thus, we get the following corollary:

\begin{cor}
\label{corvanishing}
Let $X$ be an algebraic variety with Gorenstein singularities and let $f :
\tilde{X} \rightarrow X$ be the blow-up of $X$ along the smooth normally flat
center $Y \subset X$ with exceptional divisor $E$. Assume that $E$ has
Gorenstein rational singularities and that $K_{\tilde{X}} =
f^*K_X + rE$ for a positive integer $r$. Then we have:
\begin{equation*}
 \Ri f_* \OO_{\tilde{X}}(kE) = 0
 \end{equation*}
for all $i > 0$ if $k \leq r$ and
\begin{equation*}
\R0 f_* \OO_{\tilde{X}}(kE) = \OO_{X} 
\end{equation*}
if $k \geq 0$.
\end{cor}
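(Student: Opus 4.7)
The plan is to derive the cohomological vanishings on $\tilde{X}$ from the corresponding vanishings on the exceptional divisor $E$, which come from Lemma \ref{vanishing} applied to the flat projection $q : E \to Y$; the base case on $\OO_{\tilde{X}}$ itself will then be handled by relative Kawamata--Viehweg vanishing applied directly to $f$.

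First I would check that the hypotheses of Lemma \ref{vanishing} are satisfied by $q$. The morphism is flat (by normal flatness of $Y$) and projective, $E$ has Gorenstein rational singularities by assumption, and $\OO_E(E)$ is $q$-anti-ample. Restricting $K_{\tilde{X}} = f^{*} K_X \ot \OO_{\tilde{X}}(rE)$ to $E$ and using adjunction for the inclusion $E \subset \tilde{X}$, one obtains $K_E = q^{*}(K_X|_Y) \ot \OO_E((r+1)E)$. The lemma then gives $\Ri q_* \OO_E(kE) = 0$ for $i > 0$ if $k \leq r$, and for $i < \dim E - \dim Y$ if $k \geq 1$. Since the existence of $r \geq 1$ forces $Y$ to have codimension at least $2$ in $X$, the relative dimension $\dim E - \dim Y$ is at least $1$, so combining both halves of the lemma yields $\RR q_{*} \OO_E(kE) = 0$ entirely for $1 \leq k \leq r$, and $\R0 q_{*} \OO_E(kE) = 0$ for every $k \geq r+1$.

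Next I would feed these vanishings into the tautological short exact sequences
\[
0 \to \OO_{\tilde{X}}((k-1)E) \to \OO_{\tilde{X}}(kE) \to \OO_E(kE) \to 0,
\]
using that $\RR f_{*} \OO_E(kE) = i_{*} \RR q_{*} \OO_E(kE)$. For $1 \leq k \leq r$ the right term has vanishing total pushforward, so iterating the long exact sequence gives $\RR f_{*} \OO_{\tilde{X}}(jE) \simeq \RR f_{*} \OO_{\tilde{X}}$ for all $0 \leq j \leq r$; for $k \geq r+1$ the $\R0$-piece still vanishes, so $f_{*} \OO_{\tilde{X}}(kE) \simeq f_{*} \OO_{\tilde{X}}$ for all $k \geq 0$. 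In particular, proving $\R0 f_{*} \OO_{\tilde{X}}(kE) = \OO_X$ for $k \geq 0$ reduces to $f_{*} \OO_{\tilde{X}} = \OO_X$, and this is immediate from Hartogs' theorem: $X$ is Gorenstein and thus $S_2$, while $f$ is an isomorphism outside the codimension-$\geq 2$ locus $Y$.

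For the higher vanishing $\Ri f_{*} \OO_{\tilde{X}}(kE) = 0$ with $i > 0$ I would invoke relative Kawamata--Viehweg directly on $f$: for any $k \leq r - 1$ one has $kE - K_{\tilde{X}} = (k - r) E - f^{*} K_X$, which is $f$-ample since $-E$ is $f$-ample and $k - r \leq -1$; this covers every $k \leq r - 1$ including negative values, and the borderline case $k = r$ is then recovered from the isomorphism $\RR f_{*} \OO_{\tilde{X}}(rE) \simeq \RR f_{*} \OO_{\tilde{X}}((r-1)E)$ obtained above. The main obstacle is that Kawamata--Viehweg requires $\tilde{X}$ to have at worst log terminal (equivalently, for Gorenstein varieties, rational) singularities, a condition not directly spelled out in the hypotheses. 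I would close this gap with an Elkik-type inversion-of-adjunction argument: $\tilde{X}$ is Gorenstein and so Cohen--Macaulay, and contains the Cartier divisor $E$ with rational singularities, which forces $\tilde{X}$ to have rational singularities in a neighborhood of $E$; away from $E$, the map $f$ is an isomorphism onto $X \setminus Y$, so in the intended applications (where $X$ already has rational Gorenstein singularities, cf.\ Proposition \ref{rational}) the singularities of $\tilde{X}$ are rational everywhere and the KV hypotheses are met.
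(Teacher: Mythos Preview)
Your treatment of $\R0 f_{*}$ and your reduction to the vanishing on $E$ via Lemma~\ref{vanishing} and the short exact sequences are exactly what the paper does. The one substantive divergence is in how you obtain the higher vanishing $\Ri f_{*}\OO_{\tilde X}(kE)=0$ for $i>0$ and $k\le r$: you invoke relative Kawamata--Viehweg on $\tilde X$, whereas the paper uses the much more elementary relative Serre/Grothendieck vanishing for the $f$-ample line bundle $\OO_{\tilde X}(-E)$ to get the base case $k\ll 0$, and then climbs up to $k=r$ one step at a time through the same exact sequences (using only $\Ri q_{*}\OO_{E}(kE)=0$ for $i>0$, $k\le r$, which needs no rationality assumption on $\tilde X$).

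This difference matters, because it is precisely where your argument has a gap. Kawamata--Viehweg requires $\tilde X$ to have (at least) rational singularities, and the corollary as stated assumes only that $X$ is Gorenstein. Your Elkik argument does give rationality of $\tilde X$ in a neighbourhood of $E$, but you then fall back on ``the intended applications'' to get it globally. That is not needed: since $f$ is an isomorphism off $Y$, the sheaves $\Ri f_{*}\OO_{\tilde X}(kE)$ for $i>0$ are supported on $Y$, so one may replace $X$ by an open neighbourhood $V$ of $Y$ with $f^{-1}(V)$ contained in the Elkik neighbourhood of $E$ (such a $V$ exists because $f$ is proper and $E=f^{-1}(Y)$), and apply KV to the proper morphism $f^{-1}(V)\to V$. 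With that localisation your proof becomes complete. Still, the paper's route via Serre vanishing is both shorter and avoids invoking Elkik's inversion of adjunction altogether, which is why it works under the bare Gorenstein hypothesis on $X$.
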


\begin{proof}
We will proceed by induction. The divisor $-E$ is relatively ample
so by Grothendieck's vanishing theorem we have $\Ri f_* \OO_{\tilde{X}}(kE) =
0$ for all $i > 0$, if $k << 0$. Now, let $k \leq r-1$ be an integer such that
$\Ri f_* \OO_{\tilde{X}}(kE) = 0$ for all $i > 0$. We have the exact sequence:

\begin{equation*}
0 \rightarrow \OO_{\tilde{X}}(k E) \rightarrow \OO_{\tilde{X}}((k +1)E)
\rightarrow \OO_{E}((k+1)E) \rightarrow 0, 
\end{equation*}
and by the adjunction formula : $K_E = f^*|_{E} K_X + (r+1) E$. So, the
long exact sequence in cohomology associated to $f_*$ and lemma \ref{vanishing}
applied to $f :  E \rightarrow Y$ show that $\Ri f_* \OO_{\tilde{X}}((k+1)E) =
0$ for any $i > 0$.
 
\bigskip
 
The variety $X$ is Gorenstein, hence Cohen Macaulay, so by Schaub's theorem
\cite{schaub} (see the proof of proposition \ref{connexe}), we know that $\R0
f_*\OO_{\tilde{X}} = \OO_{X}$. Using the above exact sequence and lemma
\ref{vanishing}, we prove by induction that $\R0 f_* \OO_{\tilde{X}}(kE) =
\OO_{X}$ for any $k \geq 0$ 
\end{proof}

\bigskip

One of course notes that part of corollary \ref{corvanishing} is a direct
consequence of Kawamata-Viehweg relative vanishing. But in any case we will need
both proposition \ref{vanishing} and corollary \ref{corvanishing} in the proof
of the main theorem.

\end{subsection}

\begin{subsection}{The technical induction}

The proof of proposition \ref{keyprop} will be done by induction, but we will
have to prove a more precise statement. We begin with some more notation. Let
$X$ be an algebraic variety with Gorenstein and canonical singularities. Let

\begin{equation*}
X_n \stackrel{\pi_n}\rightarrow X_{n-1} \ldots
X_1 \stackrel{\pi_1}\rightarrow X_0 = X,
\end{equation*}
be a wonderful resolution of $X$. We introduced the following subcategories of
$\DB (X_n)$:

\begin{equation*}
A_{j,k_j} = {i_j^{(n)}}_*\left[q_j^{*} \DB (Y_j) \ot
\OO_{E_j^{(n)}} \left((m_j-k_j)
E_j^{(n)}
+ \sum_{t=j+1}^{n} m_t E_t^{(n)} \right) \right],
\end{equation*}

for all  $0 \leq k_j \leq m_j-1$ and $1 \leq j \leq n$.
 
\bigskip

Now, let $1 \leq k \leq n$, and let $k+1 \leq p \leq n$. We denote by
$j_{k,p}^{(n)} :
E_{k,p}^{(n)} \hookrightarrow E_k^{(n)}$ the embedding of the intersection
$E_{k,p}^{(n)} = E_p^{(n)} \cap E_k^{(n)}$ which is also the total transform of
$E_{k,p}^{(p)} = E_p \cap E_k^{(p)}$ through $\pi_{p+1} \ldots \pi_n$, by item
$3$ and $4$ of definition \ref{wonderful}.
Hence we have a fibered diagram:

\begin{equation*}
 \xymatrix{
E_{k,p}^{(n)} = E_p^{(n)} \cap E_k^{(n)} \ar[rr] \ar[dd] & &
X_n \ar[dd]^{\pi_{p+1} \ldots
\pi_n} \\
& & \\
E_{k,p}^{(p)} = E_p \cap E_k^{(p)} \ar[rr]& & X_k}
\end{equation*}

We denote by $q_{k,p}$ the flat map $q_{k,p} : E_{k,p}^{(n)} \rightarrow
Y_{k,p}$,
where $Y_{k,p}$ is the smooth and proper intersection $ Y_{k,p} = Y_p \cap
E_{k}^{(p-1)}$. Notice that, again by item $3$ and $4$ of definition
\ref{wonderful}, the intersection $E_p \cap E_k^{(p)}$ is the exceptional
divisor of the blow up of $E_k^{(p-1)}$ along $Y_{k,p}$. Thus, we have another
fibered diagram: 

\begin{equation*}
 \xymatrix{
E_{k,p}^{(n)} \ar[rr]^{j_{k,p}^{(n)}} \ar[dd] \ar@/_6pc/[dddd]_{q_{k,p}} & &
E_{k}^{(n)} \ar[dd]^{\pi_{p+1} \ldots \pi_n|_{E_k^{(n)}}} \\
& & \\
E_{k,p}^{(p)} \ar[dd] \ar[rr]^{j_{k,p}^{(p)}}& & E_k^{(p)}
\ar[dd]^{\pi_p|_{E_k^{(p)}}}\\
& & \\
Y_{k,p} = Y_p \cap E_k^{(p-1)} \ar[rr]& & E_k^{(p-1)}}
\end{equation*}

For any $k+1 \leq p \leq n$, let $m_p$ be the positive integer such that
$K_{X_p} = \pi_p^* K_{X_{p-1}} + m_p E_p$. By proposition \ref{canonical},
this is also the integer such that:

\begin{equation}
\label{equacanonical}
K_{E_k^{(p)}} = \pi_p^{*}|_{E_{k}^{(p)}} K_{E_k^{(p-1)}} + m_p E_{k,p}^{(p)}.
\end{equation}  

We define:

\begin{equation*}
B^k_{p,k_p} = {j_{k,p}^{(n)}}_* \left[q^*_{k,p} \DB(Y_{k,p}) \ot
\OO_{E_{k,p}^{(n)}} \left((m_p
-k_p) E_{k,p}^{(n)} + \sum_{i=p+1}^{n} m_i E_{k,i}^{(n)} \right) \right],
\end{equation*}
for all  $0 \leq k_p \leq m_p-1$.

Finally we let:

\begin{equation*}
C^k_{k,r_k} =  q_k^* \DB(Y_k) \ot \OO_{E_k^{(n)}} \left((m_k-r_k)E_k^{(n)} +
\sum_{i=k+1}^{n} m_i E_i^{(n)} \right),
\end{equation*}
for all $0 \leq r_k \leq m_k-1$.

\bigskip

The following result is the more precise version of proposition \ref{keyprop}
that we will prove:

\begin{prop} \label{keypropbis}

With the above hypothesis and notations, we
have a semi-orthogonal decomposition:

\begin{equation*}
\DB (X_n) = \langle A_{1,0}, \ldots, A_{1,m_1-1}, \ldots, A_{j,k_j}, \ldots,
A_{n,m_n-1},\, D_{X_n} \rangle,
\end{equation*}
where $D_{X_n}$ is the left orthogonal to the full admissible subcategory
generated by the $A_{j,k_j}$. Moreover, we have the inclusion $(\pi_1
\ldots \pi_n)^{*} \DP(X) \subset D_{X_n}$.

\bigskip

For each $1 \leq k \leq n$, we also have a semi-orthogonal decomposition:

\begin{equation*}
\DB (E_k^{(n)}) = \langle C^k_{k,0}, \ldots, C^k_{k,m_k-1}, B^k_{k+1,0}, \ldots,
B^k_{p,k_p}, \ldots, B^k_{n,m_n-1}, D_{E_k^{(n)}} \rangle, 
\end{equation*}
where $D_{E_k^{(n)}}$ is the left orthogonal to the subcategory generated by
the $C^k_{k, r_k }$ and the $B^k_{p, k_p}$. Moreover, we have the
inclusion $q_k^* \DB(Y_k) \subset D_{E_k^{(n)}}$. 

\end{prop}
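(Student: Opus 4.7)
The plan is to prove both parts simultaneously by induction on $n$.

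\textbf{Base case $n=1$.} I would first establish the second part, which asserts a dual Lefschetz decomposition of $\DB(E_1)$ along the flat projection $q_1 : E_1 \to Y_1$ with respect to the relatively anti-ample line bundle $\OO_{E_1}(E_1)$. By adjunction together with $K_{X_1} = \pi_1^* K_X \ot \OO_{X_1}(m_1 E_1)$, one has $K_{E_1} = q_1^* i_1^* K_X \ot \OO_{E_1}((m_1+1) E_1)$, so Lemma \ref{vanishing} applies with $r = m_1$. The resulting vanishings, in particular $\R0 {q_1}_* \OO_{E_1}(-j E_1) = 0$ for $1 \leq j \leq m_1 - 1$, combined with the projection formula, yield both the semi-orthogonality among the pieces $q_1^* \DB(Y_1) \ot \OO_{E_1}(j E_1)$ and the inclusion $q_1^* \DB(Y_1) \subset D_{E_1}$. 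The first part at $n=1$ then follows by applying Lemma \ref{divisor} to the Cartier divisor $E_1 \subset X_1$, with the compatibility $B_{m_1} = \cdots = B_1 = q_1^* \DB(Y_1) \subset B_0 = D_{E_1}$.

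\textbf{Inductive step for the first part.} By item 2 of Definition \ref{wonderful}, $\pi_2 \ldots \pi_n : X_n \to X_1$ is an $(n-1)$-step wonderful resolution of $X_1$, so the induction hypothesis yields a decomposition $\DB(X_n) = \langle A_{2,0}, \ldots, A_{n,m_n-1}, D' \rangle$ with $(\pi_2 \ldots \pi_n)^* \DP(X_1) \subset D'$. It remains to split $D'$ as $\langle A_{1,0}, \ldots, A_{1,m_1-1}, D_{X_n} \rangle$. For this I would invoke a Lemma \ref{divisor}-type argument applied to the Cartier divisor $E_1^{(n)} \subset X_n$, using as input the dual Lefschetz decomposition of $\DB(E_1^{(n)})$ provided by the second part of the induction hypothesis with $k=1$ applied to the $(n-1)$-step wonderful resolution $E_1^{(n)} \to E_1$ guaranteed by item 4. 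The inclusion $\pi_1^* \DP(X) \subset (\pi_2 \ldots \pi_n)^* \DP(X_1)$ then delivers $(\pi_1 \ldots \pi_n)^* \DP(X) \subset D_{X_n}$.

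\textbf{Inductive step for the second part, fixed $k$.} The same strategy applies one dimension lower: the first-part induction hypothesis applied to the wonderful resolution $E_k^{(n)} \to E_k$ (length $n-k$, given by the iterated version of item 4) produces the pieces $B^k_{p,k_p}$ for $p = k+1, \ldots, n$ together with a residual category $D''$ containing the pullback of $\DP(E_k)$. I would then carve out the $C^k_{k,r_k}$ pieces from $D''$ using the Lefschetz structure of $q_k : E_k \to Y_k$ (flat, with $\OO_{E_k}(E_k)$ relatively anti-ample and the adjunction formula from Proposition \ref{canonical}), propagated through $E_k \leftarrow E_k^{(n)}$ by the vanishings of Corollary \ref{corvanishing} applied along each intermediate blow-up $E_k^{(p)} \to E_k^{(p-1)}$.

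\textbf{Expected main obstacle.} The hardest and most delicate part will be verifying the cross semi-orthogonalities between the $C^k_{k, r_k}$ pieces and the $B^k_{p, k_p}$ pieces in $\DB(E_k^{(n)})$, and analogously between the $A_{1, k_1}$ pieces and the $A_{j, k_j}$ for $j \geq 2$ in $\DB(X_n)$. The issue is the twist $\OO(\sum_{i > j} m_i E_i^{(n)})$ appearing in every piece: one must trace $\Hom$'s between pieces through the relevant fibered diagrams and reduce them, via the projection formula on the flat maps $q_{k,p}$ and repeated use of Lemma \ref{vanishing}, to statements of the form $\R0 (\pi_p|_\cdot)_* \OO(-j E_{p,\cdot}) = 0$ in the range $1 \leq j \leq m_p - 1$. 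The numerical fact that makes these cancellations work out at every stage is exactly $K_{X_n} - \pi^* K_X = \sum_t m_t E_t^{(n)}$ (together with its restriction to each $E_k^{(n)}$ given by Proposition \ref{canonical}), which is precisely the compatibility encoded in the wonderful-resolution hypothesis.
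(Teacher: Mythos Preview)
Your proposal is essentially correct and follows the same inductive architecture as the paper: establish the Lefschetz-type decomposition on each $E_k^{(n)}$ by applying the induction hypothesis to the $(n-k)$-step wonderful resolution $E_k^{(n)}\to E_k$ (producing the $B^k_{p,k_p}$), then insert the $C^k_{k,r_k}$ pieces via the vanishings of Lemma~\ref{vanishing} and Corollary~\ref{corvanishing}, and finally deduce Part~1 from Part~2 together with Lemma~\ref{divisor}. Your identification of the ``main obstacle'' (the cross semi-orthogonalities $\HH(B^k_{p,k_p},C^k_{k,r_k})=0$ and $\HH(A_{p,k_p},A_{j,k_j})=0$ for $j<p$) is exactly what the paper isolates as Step~1 and Step~2 of the inductive argument.

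There is one organizational difference worth noting. You propose to obtain the pieces $A_{j,k_j}$ for $j\geq 2$ in one stroke by applying the induction hypothesis to the $(n-1)$-step resolution $X_n\to X_1$ (item~2 of Definition~\ref{wonderful}), and then only handle $j=1$ separately. The paper does not do this: it builds the decomposition of $\DB(X_n)$ entirely out of the decompositions of the $\DB(E_j^{(n)})$ for \emph{all} $j$, checking every cross-orthogonality $\HH(A_{p,k_p},A_{j,k_j})=0$ by the cartesian-square computation $(i_j^{(n)})^*(i_p^{(n)})_*\simeq (j_{j,p}^{(n)})_*(j_{p,j}^{(n)})^*$, which reduces the question precisely to $\HH(B^j_{p,k_p},C^j_{j,k_j})=0$. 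Your route is slightly more economical on this point, since the cases $2\leq j<p$ come for free from the induction on $X_1$; but you still need the same cartesian-square argument for $j=1$, so the technical core is identical.

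Two small imprecisions: what you call ``the second part of the induction hypothesis with $k=1$'' is really the second part of the proposition \emph{at level $n$}, which you must prove first in the inductive step (as the paper does); and in the key vanishing $\HH(B^k_{p,k_p},C^k_{k,r_k})=0$, the paper first applies Serre duality on $E_k^{(n)}$ to flip the direction of the $\HH$ (exploiting that $C^k_{k,r_k}\otimes K_{E_k^{(n)}}^{-1}$ has a simple form), then uses the left adjunction $(j_{k,p}^{(n)})^*$ rather than the right adjoint. This Serre-duality flip, together with Corollary~\ref{corvanishing} to strip off the extra $\sum_{i>p} m_i E_{k,i}^{(n)}$ twist, is what reduces everything to the single vanishing $(q_{k,p})_*\OO_{E_{k,p}^{(n)}}\bigl((m_p-k_p)E_{k,p}^{(n)}+\sum_{i>p} m_i E_{k,i}^{(n)}\bigr)=0$. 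Your sketch points to the right ingredients but writes the target vanishing with the wrong sign of the twist; it is $\OO(+jE)$ (with $1\leq j\leq m_p$), not $\OO(-jE)$, that vanishes under pushforward along the relatively anti-ample direction.
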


\begin{proof} Assume that $n=1$. The map $ \pi_1 : X_1 \rightarrow X$ is the blow-up of
$X$ along $Y_1$. By definition of a wonderful resolution, the exceptional
divisor $E_1$ is
smooth, the variety $X_1$ is smooth, the map $\pi_1 : E_1 \rightarrow Y_1$ is
flat and $Y_1$ is smooth. Moreover, by definition of $m_1$ and by the adjunction
formula,
we have:
\begin{equation*}
K_{E_1} = q_1^*K_X|_{Y_1} + (m_1+1) E_1. 
\end{equation*}
So by lemma \ref{vanishing}, for all $k \in \mathbb{Z}$, we have:
\begin{equation*}
\RR {q_1}_* \RR \mathcal{H}om_{E_1}(\OO_{E_1}(kE_1), \OO_{E_1}(kE_1))= \OO_{E_1}.
\end{equation*}
Therefore, the categories
$\OO_{E_1}(kE_1) \ot q_1^* \DB (Y_1) = C_{1,m_1-k}^1$ are full admissible
subcategories of
$\DB(E_1)$, for all $k \in \mathbb{Z}$. Moreover, again by lemma
\ref{vanishing},
we have a semi-orthogonal decomposition:

\begin{equation*}
\DB (E_1) = \langle C_{1,0}^1, \ldots C_{1,m_1-1}^1, D_{E_1} \rangle, 
\end{equation*}
with the property $q_1^* \DB (Y_1) \subset D_{E_1}$.

\bigskip

Now, we can apply lemma \ref{divisor} to the embedding $i_1 : E_1
\hookrightarrow X_1$,
and we find a semi-orthogonal decomposition:

\begin{equation*}
\DB(X_1) = \langle A_{1,0},\ldots, A_{1,m_1-1}, D_{X_1} \rangle,
\end{equation*}
with $D_{X_1} = \{F \in \DB(X),\, i_1^*F \in D_{E_1} \}$. Let $G \in \DP(X)$. We
have $(\pi_1 i_1)^*G = q_1^*(G|_{Y_1})$. But $G|_{Y_1} \in \DB (Y_1)$, so
that
$q_1^*(G|_{Y_1}) \in D_{E_1}$, that is $i_1^*(\pi_1^*G) \in D_{E_1}$.
Thus $\pi_1^*(\DP (X)) \subset D_{X_1}$, which settles the proof of the
proposition in the case $n=1$.

\bigskip

Let $n \geq 2$ and assume that the proposition is true if $X$ admits a $n-1$
step resolution of singularities. We will prove that the proposition is true for a $n$-step wonderful resolution of singularities. Namely, let $X$ be an
algebraic variety with Gorenstein and canonical singularities and let:

\begin{equation*}
X_n \stackrel{\pi_n}\rightarrow X_{n-1} \ldots
X_1 \stackrel{\pi_1}\rightarrow X_0 = X,
\end{equation*}
be a $n$-step wonderful resolution of $X$. Let $1 \leq k \leq n$. By item $2$
and $4$ of definition \ref{wonderful}, we know that $E_k$ admits a
$(n-k)$-step wonderful resolution:

\begin{equation*}
E_k^{(n)} \stackrel{\Pi_{n,n-1}}\rightarrow \ldots
E_k^{(k+1)} \stackrel{\Pi_{k+1,k}}\rightarrow E_k^{(k)} = E_k,
\end{equation*}
which is a succession of blow-ups along the smooth normally flat centers
$Y_{k,k+1}, \ldots Y_{k,n}$. Thus by the recursion hypothesis and by formula
\ref{equacanonical}, we have a semi-orthogonal decomposition:
\begin{equation*}
\DB (E_k^{(n)}) = \langle B^k_{k+1,0}, \ldots, B^k_{k+1,m_{k+1}-1}, \ldots,
B^k_{p,k_p}, \ldots, B^k_{n,m_n-1}, \widetilde{D_{E_k^{(n)}}} \rangle,
\end{equation*}
with the inclusion $(\Pi_{k+1,k} \ldots \Pi_{n,n-1})^* \DP (E_k) \subset
\widetilde{D_{E_k^{(n)}}}$.

\bigskip
\textbf{\fbox{Step 1}}.
\smallskip

Now we want to prove the following:
\begin{claim} \label{claim1}
For $0 \leq r_k \leq m_k-1$, the categories:
\begin{equation*}
C^k_{k,r_k} =  q_k^* \DB(Y_k) \ot \OO_{E_k^{(n)}} \left((m_k-r_k)E_k^{(n)} +
\sum_{i=k+1}^{n} m_i E_i^{(n)} \right) 
\end{equation*}
are full admissible subcategories of $\DB
(E_k^{(n)})$, right orthogonal to one another and right
orthogonal to the subcategories $B^k_{p,k_p}$ for $k+1 \leq p \leq n$ and $0
\leq k_p \leq m_p-1$. Moreover, the category $q_k^* \DB (Y_k)$ is
left orthogonal to the subcategories $C^k_{r_k,k}$ and
$B^k_{p,k_p}$.
\end{claim}

\bigskip
The fact that the categories $C^k_{k,r_k}$ are full admissible subcategories of
$\DB (E_k^{(n)})$ which are right orthogonal to one another is a simple
variation of what we proved for the case $n=1$ of the proposition, the details
are left to the reader. In order to prove claim \ref{claim1}, we are
left to prove (with an obvious abuse of notation) that:

\begin{equation*}
 \begin{split}
  \HH(B^k_{p,k_p}, C^{k}_{k,r_k}) = 0 \\
  \HH(q_k^*\DB(Y_k), C^k_{k,r_k}) = 0 \\
  \HH(q_k^* \DB(Y_k), B^k_{p,k_p}) = 0
 \end{split}.
\end{equation*}
We start with the first vanishing, that is we want to prove that:

\begin{equation*}
\begin{split}
& \HH({j_{k,p}^{(n)}}_*(q_{k,p}^*\DB(Y_{k,p}) \ot \OO_{E_{k,p}^{(n)}}((m_p -k_p)
E_{k,p}^{(n)} + \sum_{i=p+1}^{n} m_i E_{k,i}^{(n)})), \\
& q_k^* \DB(Y_k) \ot \OO_{E_k^{(n)}}((m_k-r_k)E_k^{(n)} + \sum_{i=k+1}^{n}
m_i E_i^{(n)})) = 0.\\ 
\end{split}
\end{equation*}

By formula \ref{equacanonical} and the adjunction formula we have:

\begin{equation*}
K_{E_k^{(n)}} = (m_k+1) E_k^{(n)}|_{E_k^{(n)}} + \sum_{i=k+1}^{n} m_i
E^{(n)}_{k,i} + q_{k,n}^*K_{X_k}|_{Y_{k+1}}.
\end{equation*}

Thus, by Serre duality we have the equality (note that since all
intersections are proper, $\OO_{E_k^{(n)}}(E_i^{(n)}) =
\OO_{E_k^{(n)}}(E_{k,i}^{(n)})$):

\begin{equation*}
\begin{split}
& \HH({j_{k,p}^{(n)}}_*(q_{k,p}^*\DB(Y_{k,p}) \ot \OO_{E_{k,p}^{(n)}}((m_p -k_p)
E_{k,p}^{(n)} + \sum_{i=p+1}^{n} m_i E_{k,i}^{(n)})), \\
& q_k^* \DB(Y_k) \ot \OO_{E_k^{(n)}}((m_k-r_k)E_k^{(n)} + \sum_{i=k+1}^{n}
m_i E_i^{(n)}))  \\
& = \HH(q_k^* \DB(Y_k) \ot \OO_{E_k^{(n)}}((-1-r_k)E_k^{(n)}),\\
& {j_{k,p}^{(n)}}_*(q_{k,p}^*\DB(Y_{k,p}) \ot \OO_{E_{k,p}^{(n)}}((m_p -k_p)
E_{k,p}^{(n)}
+ \sum_{i=p+1}^{n} m_i E_{k,i}^{(n)})))^{\vee}.\\
\end{split}
\end{equation*}

Note that we used that
\begin{equation*}
q_k^* \DB(Y_k) \ot \OO_{E_k^{(n)}}\left((m_k-r_k)E_k^{(n)} + \sum_{i=k+1}^{n}
m_i E_i^{(n)} \right) =  q_k^* \DB(Y_k) \ot
\OO_{E_k^{(n)}}((-1-r_k)E_k^{(n)}) \ot K_{E_k^{(n)}}
\end{equation*}
in the above equality. By adjunction we also have:

\begin{equation*}
\begin{split}
 & \HH(q_k^* \DB(Y_k) \ot \OO_{E_k^{(n)}}((-1-r_k)E_k^{(n)}), \\
& {j_{k,p}^{(n)}}_*(q_{k,p}^*\DB(Y_{k,p}) \ot \OO_{E_{k,p}^{(n)}}((m_p -k_p)
E_{k,p}^{(n)}
+ \sum_{i=p+1}^{n} m_i E_{k,i}^{(n)})))  \\
 & = \HH((j_{k,p}^{(n)})^*(q_k^* \DB(Y_k) \ot
\OO_{E_k^{(n)}}((-1-r_k)E_k^{(n)})),
\\
 & q_{k,p}^*\DB(Y_{k,p}) \ot \OO_{E_{k,p}^{(n)}}((m_p -k_p) E_{k,p}^{(n)} +
\sum_{i=p+1}^{n} m_i E_{k,i}^{(n)})). \\
\end{split}
\end{equation*}

But $(j_{k,p}^{(n)})^* (q_k^* \DB(Y_k)) \subset q_{k,p}^* \DB(Y_{k,p})$ and
$j_p^*
\OO_{E_k^{(n)}}(E_k^{(n)}) = q_{k,p}^* \OO_{Y_{k,p}}(E_k^{(p-1)})$ because we
have a fibered diagram:

\begin{equation*}
 \xymatrix{
E_{k,p}^{(n)} \ar[rr]^{j_{k,p}^{(n)}} \ar[dd]_{q_{k,p}} & &
E_{k}^{(n)} \ar[dd]^{\pi_{p} \ldots \pi_n|_{E_k^{(n)}}} \\
& & \\
Y_{k,p}  \ar[rr]& & E_k^{(p-1)}}
\end{equation*}

As a consequence, to prove that:

\begin{equation*}
\begin{split}
& \HH((j_{k,p}^{(n)})^*(q_k^* \DB(Y_k) \ot
\OO_{E_k^{(n)}}((-1-r_k)E_k^{(n)})),\\
& q_{k,p}^*\DB(Y_{k,p}) \ot \OO_{E_{k,p}^{(n)}}((m_p -k_p) E_{k,p}^{(n)} +
\sum_{i=p+1}^{n} m_i E_{k,i}^{(n)}))= 0,\\
\end{split}
\end{equation*}
we only have to prove that:

\begin{equation}
\label{vanishing2}
{q_{k,p}}_* \left(\OO_{E_{k,p}^{(n)}}((m_p -k_p) E_{k,p}^{(n)} +
\sum_{i=p+1}^{n}
m_i E_{k,i}^{(n)}) \right) = 0.
\end{equation}

The map $q_{k,p} : E^{(n)}_{k,p} \rightarrow Y_{k,p}$ factors as
$\theta_{k,p}^n : E_{k,p}^{(n)} \rightarrow E_{k,p}^{(p)}$ followed by the
projection
$\widetilde{\Pi_{p,p-1}} : E_{k,p}^{(p)} \rightarrow Y_{k,p}$. But the map
$\theta_{k,p}^n$  is a succession of blow-ups
along the smooth normally flat centers $Y_{i} \cap E_{k,p}^{(i-1)}$, which
exceptional divisors on
$E_{k,p}^{(n)}$ are the $E_{k,i}^{(n)}|_{E_{k,p}^{(n)}}$, for $i=p+1 \ldots n$.
Moreover, by item $3$ and $4$ of definition \ref{wonderful} and
by proposition \ref{canonical}, we know that:

\begin{equation*}
K_{E_{k,p}^{(n)}} = (\theta_{k,p}^n)^* K_{E_{k,p}^{(p)}} + \sum_{i=p+1}^{n}
m_iE_{k,i}^{(n)}|_{E_{k,p}^{(n)}}. 
\end{equation*}
We apply corollary \ref{corvanishing} to the morphism $\theta_{k,p}^n :
E_{k,p}^{(n)} \rightarrow E_{k,p}^{(p)}$ to get:

\begin{equation*}
{\theta_{k,p}^n}_* \left(\OO_{E_{k,p}^{(n)}}((m_p -k_p) E_{k,p}^{(n)} +
\sum_{i=p+1}^{n} m_i E_{k,i}^{(n)}) \right) = \OO_{E_{k,p}^{(p)}}((m_p -k_p)
E_{k,p}^{(p)}),
\end{equation*}

so that

\begin{equation*}
{q_{k,p}}_* \left(\OO_{E_{k,p}^{(n)}}((m_p -k_p) E_{k,p}^{(n)} +
\sum_{i=p+1}^{n}
m_i E_{k,i}^{(n)}) \right) = \widetilde{\Pi_{p,p-1}}_* \OO_{E_{k,p}^{(p)}}((m_p
-
k_p)E_{k,p}^{(p)}).
\end{equation*}

Now, by formula \ref{equacanonical} and the adjunction formula:

\begin{equation*}
K_{E_{k,p}^{(p)}} = \widetilde{\Pi_{p,p-1}}^*{K_{E_k^{(p-1)}}}|_{Y_{k,p}} +
(m_p+1)E_{k,p}^{(p)}.
\end{equation*}

So by lemma \ref{vanishing}, we have the vanishing:

\begin{equation*} 
\widetilde{\Pi_{p,p-1}}_* \OO_{E_{k,p}^{(p)}}((m_p - k_p)E_{k,p}^{(p)}) = 0,
\end{equation*}
for all $0 \leq k_p \leq m_p-1$, which is precisely what we wanted.

\bigskip

To conclude step 1, we must show that:

\begin{equation*}
q_k^* \DB (Y_k) \subset \leftexp{\perp}{C_{k,r_k}} \,\, \text{and} \,\, q_k^*
\DB (Y_k) \subset \leftexp{\perp}{B^k_{p,k_p}}
\end{equation*}
for all $k+1 \leq p \leq n$. We have:

\begin{equation*}
\begin{split}
& \HH \left[q_k^*\DB(Y_k),(j_{k,p}^{(n)})_*\left(q^*_{k,p} \DB(Y_{k,p}) \ot
\OO_{E_{k,p}^{(n)}}((m_p -k_p) E_{k,p}^{(n)} + \sum_{i=p+1}^{n} m_i
E_{k,i}^{(n)}) \right) \right]  \\
& = \HH \left[(j_{k,p}^{(n)})^* q_k^*\DB(Y_k), q^*_{k,p} \DB(Y_{k,p}) \ot
\OO_{E_{k,p}^{(n)}} \left((m_p -k_p) E_{k,p}^{(n)} + \sum_{i=p+1}^{n} m_i
E_{k,i}^{(n)} \right) \right], \\
& \subset \HH \left[q_{k,p}^* \DB(Y_{k,p}), q^*_{k,p} \DB(Y_{k,p}) \ot
\OO_{E_{k,p}^{(n)}} \left((m_p -k_p) E_{k,p}^{(n)} + \sum_{i=p+1}^{n} m_i
E_{k,i}^{(n)} \right) \right],\\
& \text{since}\, (j_{k,p}^{(n)})^* q_k^* \DB(Y_k) \subset q_{k,p}^*
\DB(Y_{k,p}). \\
\end{split}
\end{equation*}

But we have already proved in formula \ref{vanishing2} that:
\begin{equation*}
{q_{k,p}}_*\OO_{E_{k,p}^{(n)}} \left((m_p -k_p) E_{k,p}^{(n)} + \sum_{i=p+1}^{n}
m_i E_{k,i}^{(n)} \right) = 0,
\end{equation*}

so that:

\begin{equation*}
\HH\left[q_{k,p}^* \DB(Y_{k,p}), q^*_{k,p} \DB(Y_{k,p}) \ot
\OO_{E_{k,p}^{(n)}} \left((m_p
-k_p) E_{k,p}^{(n)} + \sum_{i=p+1}^{n} m_i E_{k,i}^{(n)} \right) \right] =
0.
\end{equation*}
This proves that $q_k^* \DB (Y_k) \subset \leftexp{\perp}{B^k_{p,k_p}}$, for
all $k+1 \leq p \leq n$. The fact that $q_k^* \DB (Y_k) \subset
\leftexp{\perp}{C_{k,r_k}}$ is done in the same fashion. Step 1 is thus
complete, that is we have a semi-orthogonal decomposition:

\begin{equation*}
\DB (E_k^{(n)}) = \langle C^k_{k,0}, \ldots, C^k_{k,m_k-1}, B^k_{k+1,0}, \ldots,
B^k_{p,k_p}, \ldots, B^k_{n,m_n-1}, D_{E_k^{(n)}} \rangle, 
\end{equation*}
with the property : $q_k^* \DB(Y_k) \subset D_{E_k^{(n)}}$.

\bigskip

\textbf{\fbox{Step 2}}

\smallskip

From the above semi-orthogonal decomposition of $\DB(E_k^{(n)})$ for all $1 \leq
k \leq n$, we want to deduce a semi-orthogonal decomposition:

\begin{equation*}
\DB (X_n) = \langle A_{1,0}, \ldots, A_{1,m_1-1}, \ldots, A_{j,k_j}, \ldots,
A_{n,m_n-1},\, D_{X_n} \rangle,
\end{equation*}
with the property : $(\pi_1 \ldots \pi_n)^{*}
\DP(X) \subset D_{X_n}$, where $D_{X_n}$ is the left orthogonal to the
subcategory generated by the $A_{j,k_j}$.

\bigskip

Let $j \in [1,\ldots,n]$.  We note that $A_{j,k_j} = (i_j^{(n)})_*C^j_{j,k_j}$.
Lemma \ref{divisor} applied to the semi-orthogonal decomposition we found for
$\DB(E_j^{(n)})$  proves that for all
$0 \leq k_j \leq m_j-1$, the subcategories $A_{j,k_j}$ are admissible full
subcategories of $\DB(X_n)$ which are left orthogonal to one another. 
\bigskip

So, we are left to prove that for $1 \leq j < p \leq n$, for all $0 \leq k_j
\leq m_j-1$ and for all $0 \leq k_p \leq m_p-1$, we have:

\begin{equation*}
\begin{split}
& \HH({i_p^{(n)}}_*(q_p^{*} \DB (Y_p) \ot \OO_{E_p^{(n)}}((m_p-k_p) E_p^{(n)}
+ \sum_{t=p+1}^{n} m_t E_t^{(n)})),\\
& {i_j^{(n)}}_*(q_j^{*} \DB (Y_j) \ot \OO_{E_j^{(n)}}((m_j-k_j) E_j^{(n)}
+ \sum_{t=j+1}^{n} m_t E_t^{(n)}))) =0,\\
\end{split}
\end{equation*}

that is:

\begin{equation*}
\begin{split}
& \HH({i_j^{(n)}}^*{i_p^{(n)}}_*(q_p^{*} \DB (Y_p) \ot \OO_{E_p^{(n)}}((m_p-k_p)
E_p^{(n)}
+ \sum_{t=p+1}^{n} m_t E_t^{(n)})),\\
& q_j^{*} \DB (Y_j) \ot \OO_{E_j^{(n)}}((m_j-k_j) E_j^{(n)}
+ \sum_{t=j+1}^{n} m_t E_t^{(n)})) =0.\\
\end{split}
\end{equation*}
or with our notations:

\begin{equation*}
\begin{split}
& \HH\left[{i_j^{(n)}}^*{i_p^{(n)}}_* \left(q_p^{*} \DB (Y_p) \ot
\OO_{E_p^{(n)}}((m_p-k_p)
E_p^{(n)}
+ \sum_{t=p+1}^{n} m_t E_t^{(n)}) \right),C^j_{j,k_j} \right] =0.\\
\end{split}
\end{equation*}

But we have a fibered diagram:

\begin{equation*}
 \xymatrix{
 E_{j,p}^{(n)} \ar[rr]^{j_{j,p}^{(n)}} \ar[dd]_{j_{p,j}^{(n)}} & & E_j^{(n)}
\ar[dd]^{i_j^{(n)}} \\
& & \\
E_p^{(n)} \ar[rr]^{i_p^{(n)}}& & X_n}
\end{equation*}

The intersection $E_j^{(n)} \cap E_p^{(n)} = E_{j,p}^{(n)}$ is proper and all
varieties appearing in this fibered diagram are smooth so that this diagram is
exact cartesian, that is:

\begin{equation*}
{i_j^{(n)}}^*{i_p^{(n)}}_*(F) = {j_{j,p}^{(n)}}_*{j_{p,j}^{(n)}}^*(F) ,
\end{equation*}
for all $F \in \DB(E_p^{(n)})$. In particular we have:

\begin{equation*}
\begin{split}
& {i_j^{(n)}}^*{i_p^{(n)}}_*\left(q_p^{*} \DB (Y_p) \ot
\OO_{E_p^{(n)}}((m_p-k_p)
E_p^{(n)}
+ \sum_{t=p+1}^{n} m_t E_t^{(n)})\right) \\
& = {j_{j,p}^{(n)}}_* {j_{p,j}^{(n)}}^*\left(q_p^{*} \DB (Y_p) \ot
\OO_{E_p^{(n)}}((m_p-k_p) E_p^{(n)}
+ \sum_{t=p+1}^{n} m_t E_t^{(n)})\right)  \\
& = {j_{j,p}^{(n)}}_*\left(q_{j,p}^{*} \DB (Y_{j,p}) \ot
\OO_{E_{j,p}^{(n)}}((m_p-k_p)
E_{j,p}^{(n)}
+ \sum_{t=p+1}^{n} m_t E_{j,t}^{(n)})\right) \\
& = B^j_{p,k_p}
\end{split}
\end{equation*}

The semi-orthogonal decomposition we found for $\DB(E_j^{(n)})$ in step
1 shows that for $1 \leq j < p \leq n$, for all $0 \leq k_j
\leq m_j-1$ and for all $0 \leq k_p \leq m_p-1$:

\begin{equation*}
\begin{split}
& \HH(B^j_{p,k_p}, C^j_{j,k_j}) =0,\\
\end{split}
\end{equation*}
which is the vanishing we wanted. As a consequence, we have a semi-orthogonal
decomposition:

\begin{equation*}
\DB (X_n) = \langle A_{1,0}, \ldots, A_{1,m_1-1}, \ldots, A_{j,k_j}, \ldots,
A_{n,m_n-1},\, D_{X_n} \rangle.
\end{equation*}

The fact that $(\pi_1 \ldots \pi_n)^{*} \DP(X) \subset D_{X_n}$ is proved
easily, if one notices that for all $1 \leq j \leq n$:

\begin{equation*}
{i_j^{(n)}}^* \left((\pi_1 \ldots \pi_n)^{*} \DP(X)\right) \subset q_j^*
\DB(Y_k),
\end{equation*}
 and that for all $0 \leq k_j \leq m_j-1$:

\begin{equation*}
\HH\left[q_j^* \DB(Y_j), q_j^*\DB(Y_j) \ot
\OO_{E_j^{(n)}}\left((m_j-k_j)E_j^{(n)} +
\sum_{t=j+1}^{n} m_t E_t^{(n)}\right)\right] = 0,
\end{equation*}
since $q_j^* \DB(Y_j) \subset D_{E_j^{(n)}}$.
This concludes Step 2 and the recursive proof of the proposition.
\end{proof}

\end{subsection}

\end{section}

\newpage

\begin{section}{Conclusion : Minimality and further existence results}

\begin{subsection}{Minimality for categorical resolutions}

In this section we will discuss minimality of categorical crepant resolutions of singularities in some special settings. In \cite{kuz}, Kuznetsov conjectures the following:

\begin{conj} \label{conjkuz}
Let $X$ be an algebraic variety with rational Gorenstein singularities. Let $\T \rightarrow \DB(X)$ be a categorical strongly crepant resolution of $X$. Then, for any other categorical resolution $\T' \rightarrow \DB(X)$, there exists a fully faithful functor:

\begin{equation*}
\T \hookrightarrow \T'
\end{equation*} 
\end{conj}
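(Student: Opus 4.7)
The most natural strategy for attacking Conjecture \ref{conjkuz} would be to build the embedding $\T \hookrightarrow \T'$ as a Fourier--Mukai type transform supported on a common resolution of both ambient smooth models, using strong crepancy as the source of rigidity.

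Concretely, fix ambient resolutions $\pi : \tilde{X} \to X$ and $\pi' : \tilde{X}' \to X$ together with admissible embeddings $\d : \T \hookrightarrow \DB(\tilde{X})$ and $\d' : \T' \hookrightarrow \DB(\tilde{X}')$ compatible with the push-forward functors to $\DB(X)$. Let $Z$ be a smooth variety obtained by resolving the closure of the graph of the birational map $\tilde{X} \dashrightarrow \tilde{X}'$ inside $\tilde{X} \times_X \tilde{X}'$, with projections $p : Z \to \tilde{X}$ and $p' : Z \to \tilde{X}'$. One would then define the candidate functor
\begin{equation*}
\Phi \;=\; \d'^{*} \circ \RR p'_{*} \circ \LL p^{*} \circ \d \;:\; \T \longrightarrow \T',
\end{equation*}
where $\d'^{*}$ is the left adjoint of the admissible embedding $\d'$.

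The next step is to verify that $\Phi$ is fully faithful, i.e.\ that the adjunction unit $\mathrm{Id}_{\T} \to \Phi^{R} \circ \Phi$ is an isomorphism. At this point one exploits the strong crepancy hypothesis on $\T$: the identity being a relative Serre functor for $\T$ over $\DB(X)$ is a rigidity property which, combined with the fact that $\T$ is a module category over $\DP(X)$, allows one to identify $\LL \pi_{\T}^{*}$ and $\LL \pi_{\T}^{!}$ on perfect complexes. Using the projection formula along $p$ and $p'$ together with the crepant relative duality on $\tilde{X}$ (which can be arranged by replacing $\tilde{X}$ by a possibly non-minimal crepant categorical resolution contained in $\DB(\tilde{X})$), one should be able to collapse the composition $\RR p_{*} \LL p'^{*} \RR p'_{*} \LL p^{*}$ on the image of $\d$ to the identity, which is the key step in showing that the unit map is invertible.

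The main obstacle is that $\T'$ is only assumed to be a (general) categorical resolution, so the left-adjoint projection $\d'^{*}$ onto $\T'$ may destroy the rigidity inherited from $\T$: the ``relative-Serre-functor error term'' of $\T'$ over $\DB(X)$ is trivial for $\T$ but arbitrary for $\T'$, and one must show that this discrepancy does not produce extra morphisms in $\mathrm{Hom}_{\T'}(\Phi A, \Phi B)$. A secondary difficulty is to verify that $\Phi$ is independent of the auxiliary choice of $Z$. These issues are closely related to the geometric Bondal--Orlov conjecture, which itself is open beyond dimension three, so a full resolution of Conjecture \ref{conjkuz} is expected to require substantial new input, perhaps a categorical analogue of Bridgeland's moduli-theoretic arguments for $3$-fold flops.
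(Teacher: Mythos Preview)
The statement you are addressing is a \emph{conjecture}, not a theorem, and the paper does not provide a proof of it.  On the contrary, the paper explicitly remarks that Conjecture~\ref{conjkuz} ``seems to be highly non trivial and very difficult to check, even with the most basic examples'' and, in a footnote, that ``we have absolutely no clue how to construct the functor $\T \hookrightarrow \T'$.''  So there is no argument in the paper against which to benchmark your proposal; the paper instead replaces Conjecture~\ref{conjkuz} by the strictly weaker notion of \emph{weak minimality} (Definition~\ref{weakminidef}) and proves Proposition~\ref{weakmini}, where a dominating map $\pi_{12} : \tilde{X}_1 \to \tilde{X}_2$ and a module-category structure are \emph{assumed} in advance.

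Your write-up is not a proof either, and you correctly flag this at the end.  The genuine gap is precisely the one the paper identifies: constructing the functor.  Your candidate $\Phi = \d'^{*} \circ \RR p'_{*} \circ \LL p^{*} \circ \d$ is the natural guess, but there is no mechanism in your argument that forces it to land in $\T'$ before applying the projector $\d'^{*}$, nor any control over what $\d'^{*}$ does once applied, since $\T'$ carries no crepancy or module-category hypothesis.  The step where you propose to ``collapse the composition $\RR p_{*} \LL p'^{*} \RR p'_{*} \LL p^{*}$ on the image of $\d$ to the identity'' using strong crepancy of $\T$ alone is not justified: strong crepancy gives you a relative Serre functor statement over $\DB(X)$, but the relevant adjunctions here are over $Z$, $\tilde X$, and $\tilde X'$, and the discrepancy divisors of $p$ and $p'$ enter non-trivially.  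In short, your outline is a reasonable heuristic roadmap, consistent with the paper's own assessment that the problem is open, but it does not close the gap and should not be presented as a proof.
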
 
This is indeed a generalization of the Bondal-Orlov conjecture we mentioned in the introduction. Hence, it seems very interesting to look for categorical strongly crepant resolutions of singularities.

If $X$ admits a $1$-step wonderful resolution of singularities $\pi : \tilde{X}
\rightarrow X$, Kuznetsov relates the existence of a strongly crepant
categorical resolution of singularities to the existence of a rectangular
Lefschetz decomposition on the exceptional divisor of $\pi$ (see Theorem \ref{kuzmaintheo}).  It would be interesting to see if his techniques can be pushed in our context and to find more examples of varieties admitting strongly crepant categorical resolution of singularities.

\begin{quest}
 
What are the determinantal
varieties which can be proved to have strongly crepant categorical resolutions
of singularities?
 
\end{quest}

Kuznetsov answers positively to the question for the Pfaffian $\mathrm{Pf}_4 =
\mathbb{P}(\{ \omega \in \bigwedge^{2}V,\,
\operatorname{rank}(\omega) \leq 4 \}$, when $\dim V$ is odd
(see \cite{kuz}, section $8$).

\bigskip

Conjecture \ref{conjkuz} shows that strongly crepant resolution of singularities are expected to enjoy very strong minimality properties. But this conjecture seems to be highly non trivial and very difficult to check, even with the most basic examples \footnote{The hard point being that, in the setting of conjecture \ref{conjkuz}, we have absolutely no clue how to construct the functor $\T \hookrightarrow \T'$.}. Nevertheless, there is a slightly different, certainly easier to check, point of view on minimality for a resolution of singularities. 

\begin{defi} \label{weakminidef}
 Let $X$ be an algebraic variety with Gorenstein and rational singularities. let
$ {\pi}_1 : \tilde{X}_1 \rightarrow X$ be a resolution of singularities of $X$
and let $\d_1 : \T_1
\hookrightarrow \DB (\tilde{X}_1)$ be a categorical
resolutions of singularities of $X$. We say that $\T_1$ is \textbf{weakly
minimal} if for any other resolution of singularities $ {\pi}_2 : \tilde{X}_2
\rightarrow X$ with a morphism $\pi_{12} : \tilde{X}_1 \rightarrow \tilde{X}_2$
and a commutative diagram:

\begin{equation*}
\xymatrix{
\DB(\tilde{X}_1) \ar[dd]^{{\pi_1}_*} \ar[rd]^{{\pi_{12}}_*} & \\
& \DB(\tilde{X}_2) \ar[ld]^{{\pi_2}_*} \\
\DB(X) & \\
}
\end{equation*}

and any other categorical resolution $\d_2 : \T_2
\hookrightarrow \DB (\tilde{X}_2)$ with a commutative diagram:

\begin{equation*}
\xymatrix{
\T_1 \ar[dd]^{{\pi_{\T_1}}_*} \ar[rd]^{{{\pi_{{\T_1},{\T_2}}}_*}} & \\
& \T_2 \ar[ld]^{{\pi_{\T_2}}_*} \\
\DB(X) & \\
}
\end{equation*}

with ${\pi_{\T_i}}_* = {\pi_i}_*\d_i$, ${\pi_{{\T_1},{\T_2}}}_* = \d_2^!
{\pi_{12}}_* \d_1$ and such that $\T_1$ and $\T_2$ are $\DP (X)$-module
categories, we have the implication:
\begin{equation*}
\p12^* \text{is fully faithful} \implies \T_1 = \T_2.
\end{equation*}

\end{defi}

Recall that given any triangulated category $\A$, we say that it is
\textit{connected} if it does not split as a sum $\A = \A_1 \oplus \A_2$, where
the $\A_i$ are both non trivial and totally orthogonal to each other.

\begin{prop}
 \label{weakmini}
Let $X$ be a projective algebraic variety with Gorenstein and rational singularities. let
$ {\pi}_1 : \tilde{X}_1 \rightarrow X$ be a resolution of $X$ and let $\d_1 :
\T_1
\hookrightarrow \DB (\tilde{X}_1)$ be a categorical
resolution of singularities of $X$. If $\T_1$ is strongly crepant and
connected, then
$\T_1$ is weakly minimal. 
\end{prop}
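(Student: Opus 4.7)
The plan is to exploit the semi-orthogonal decomposition induced by a fully faithful pullback and then use the strong crepancy of $\T_1$ to upgrade it into a completely orthogonal direct sum, at which point the connectedness of $\T_1$ finishes the argument.

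Write $f = {\p12}_*$, so that $f^* = \p12^*$ is its left adjoint and $f^!$ its right adjoint. Since $\T_2$ is an admissible subcategory of $\DB(\tilde{X}_2)$ and the latter carries a natural relative Serre functor over $\DB(X)$ (essentially $-\ot \w_{\tilde{X}_2/X}[\dim \tilde{X}_2-\dim X]$), the category $\T_2$ inherits a relative Serre functor $\sigma_2$, which is an autoequivalence of $\T_2$. The general formula relating adjoints of a $\DP(X)$-linear functor to the relative Serre functors of its source and target yields $f^! = \sigma_1 \circ f^* \circ \sigma_2^{-1}$. The strong crepancy of $\T_1$ says $\sigma_1 = \mathrm{id}$, so $f^! = f^* \circ \sigma_2^{-1}$; since $\sigma_2^{-1}$ is an autoequivalence of $\T_2$, one deduces an equality of subcategories of $\T_1$:
\begin{equation*}
f^!(\T_2) \;=\; f^*(\T_2).
\end{equation*}

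Next, the full faithfulness of $f^*$ produces a semi-orthogonal decomposition
\begin{equation*}
\T_1 \;=\; \langle \ker f,\; f^*(\T_2) \rangle,
\end{equation*}
with $\ker f = \{ A \in \T_1 : fA = 0 \}$; the semi-orthogonality $\HH(f^*(\T_2), \ker f) = 0$ is immediate from $(f^*, f)$-adjunction. Using the identification $f^!(\T_2) = f^*(\T_2)$, one also gets the opposite vanishing: for $A \in \ker f$ and $B = f^!C \in f^*(\T_2)$, one has $\HH(A,B) = \HH(A, f^! C) = \HH(fA, C) = 0$. Hence $\ker f$ and $f^*(\T_2)$ are completely orthogonal, and $\T_1$ splits as a direct sum of two triangulated subcategories.

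Connectedness of $\T_1$ then forces one of the summands to vanish. Since $\pi^*_{\T_2} \OO_X \neq 0$ sits in $\T_2$ and $f^*$ is fully faithful, $f^*(\T_2) \neq 0$, so $\ker f = 0$. Combined with the decomposition above this gives $\T_1 = f^*(\T_2)$, and $f^*:\T_2 \to \T_1$ is both fully faithful and essentially surjective, hence an equivalence. This is exactly the conclusion $\T_1 = \T_2$ of the definition.

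The main obstacle is justifying the relative adjunction formula $f^! = \sigma_1 f^* \sigma_2^{-1}$, together with the existence of a relative Serre functor on $\T_2$ in the required generality for $\DP(X)$-linear functors between $\DP(X)$-module categories; both are formal consequences of Kuznetsov's framework, but need to be cited or reproved carefully. The projectivity of $X$ enters so that the relative-Serre-theoretic vanishings over $\DB(X)$ translate into honest vanishings of the $\HH$-spaces used above.
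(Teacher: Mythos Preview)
Your argument is correct and follows the same overall strategy as the paper: obtain the semi-orthogonal decomposition $\T_1 = \langle (\p12^*\T_2)^{\perp}, \p12^*\T_2 \rangle$, upgrade it to a completely orthogonal one using the triviality of the relative Serre functor on $\T_1$, and conclude by connectedness.

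The only real difference is in how the opposite vanishing $\HH(\ker f, f^*\T_2)=0$ is obtained. You invoke the relative adjunction formula $f^! = \sigma_1\, f^*\, \sigma_2^{-1}$ and the existence of a relative Serre functor $\sigma_2$ on $\T_2$; this is what you yourself flag as ``the main obstacle''. The paper sidesteps precisely this point: instead of relative Serre on $\T_2$, it uses the \emph{absolute} Serre functor on $\T_1$ (which exists because $\T_1$ is admissible in $\DB(\tilde{X}_1)$ with $\tilde{X}_1$ smooth projective). Strong crepancy then identifies $S_{\T_1}(a)$ with $a\otimes \pi_1^*K_X[\dim X]$, and the twist by $\pi^*K_X^{-1}$ is moved to the $\T_2$-side using only the $\DP(X)$-module structure of $\T_2$, which is part of the hypotheses. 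So the paper's computation is
\[
\HH(a,\p12^*b)=\HH(\p12^*b,S_{\T_1}(a))^{\vee}
=\HH\bigl(\p12^*(b\otimes\pi_2^*K_X^{-1}[-\dim X]),\,a\bigr)^{\vee}=0,
\]
with no need to know that $\T_2$ itself carries a relative Serre functor. In short: same proof skeleton, but the paper's version of the key step is more elementary and eliminates exactly the technical point you identified as needing outside input.
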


This is a categorical generalization of the following result (which follows
from the ramification formulas). Let $X$ be an algebraic variety with Gorenstein
rational singularities. Let $\pi_1 : \tilde{X} \rightarrow X$ be a crepant
resolution of singularities of
$X$ and let $ \pi_2 : Y \rightarrow X$ be any other resolution of singularities
of $X$. Assume that there is a morphism $ \pi_{12} : \tilde{X} \rightarrow Y$
which makes the following triangle commutative:

\begin{equation*}
\xymatrix{
\tilde{X} \ar[dd]^{\pi_1} \ar[rd]^{\pi_{12}} & \\
& Y \ar[ld]^{\pi_2} \\
X & \\
}
\end{equation*}
then $\tilde{X} = Y$. 

Note that if $\T \rightarrow \DB(X)$ is a categorical strongly crepant resolution of $X$, then there exists a full admissible connected subcategory $\T' \subset \T$, such that $\T' \rightarrow \DB(X)$ is a strongly crepant resolution (see Lemme $1.3.9$ of \cite{theseabuaf}). Finally, we do expect that the projectivity assumption should be removed, but the proof in the non-projective case seems to be quite annoying. One would need to introduce semi-orthogonal decompositions of a derived category with respect to a base scheme and prove all the basic results concerning semi-orthogonal decomposition in that setting. We leave it as an open question for the reader.

\begin{proof}[of proposition \ref{weakmini}]
Let $\pi_2 : \tilde{X_2} \rightarrow X$ and $\d_2 \hookrightarrow \DB
(\tilde{X}_2)$ be another categorical resolution of singularities of $X$ for
which we have the commutative diagrams of definition \ref{weakminidef}.
Since ${\p12}^*$ is fully faithful and has a left adjoint, we have a
semi-orthogonal decomposition:

\begin{equation*}
 \T_1 = \langle ({\p12}^*\T_2)^{\perp}, {\p12}^*\T_2 \rangle.
\end{equation*}
Since $\T_1$ is connected, to prove that $\T_1 = \T_2$, we only have to prove
that the above decomposition is totally orthogonal. This means that we have to
prove that for all $a \in (\p12^*\T_2)^{\perp}$ and for all $b
\in \p12^*\T_2$:

\begin{equation*}
\HH(a,\p12^*b)=0. 
\end{equation*}
 But 
\begin{equation*}
\begin{split}
& \HH(a, \p12^*b) \\
& = \HH(\p12^*b, S_{\T_1}(a))^{\vee}, \text{by Serre
duality,}\\
& =\HH(\d_1^* \pi_{12}^* \d_2 b, S_{\T_1}(a))^{\vee}, \text{by definition of}\,
{\p12}_*,\\
& =\HH (\pi_{12}^* \d_2 b, \d_1 S_{\T_1}(a))^{\vee}, \text{by adjunction,} \\
& = \HH(\pi_{12}^* \d_2 b, \pi_1^*K_X[\dim X] \ot \d_1 a)^{\vee},
\text{because}\, \T_1 \, \text{is a strongly crepant resolution of}\, X,\\
& = \HH(\pi_{12}^* \d_2 b \ot \pi_{12}^* \pi_2^*(K_X^{-1})[-\dim X], \d_1
a)^{\vee},\\
& = \HH (\pi_{12}^* \d_2 b', \d_1 a)^{\vee}, \text{with}\, b'\in \T_2,\,
\text{because}\, \T_2\, \text{is a}\, \DP (X)\text{-module category,} \\
& = \HH (\d_1^* \pi_{12}^* \d_2 b', a)^{\vee}, \text{by adjunction,}\\
& = \HH(\p12^* b', a)^{\vee} \\
& =0, \text{as}\, a \in (\p12^*\T_2)^{\perp}.\\
\end{split}
\end{equation*}

\end{proof}

\end{subsection}

\begin{subsection}{Existence results for prehomogeneous spaces}

In section $2.1$ of the present paper, we describe some varieties with Gorenstein rational singularities which have a wonderful resolution of singularities. As a consequence of our main theorem, they admit a categorical weakly crepant resolution of singularities. These examples fit very well into the theory of
\textit{reductive prehomogeneous vector spaces}. We recall that a reductive
prehomogeneous vector space is the data $(\mathrm{G},V)$ of a reductive linear
group $\mathrm{G}$
and a finite dimensional vector space $V$ such that $\mathrm{G}$ acts on $V$
with a dense orbit. For instance, the determinantal varieties defined by the
minors of the generic square (resp. symmetric, resp. skew-symmetric) $n
\times n$ matrix are the orbit closures of the action of $\mathrm{GL_n \times
GL_n}$ (resp. $\mathrm{GL_n}$, resp. $\mathrm{GL_n}$) on $V \otimes V$ (resp.
$S^2(V)$, resp. $\bigwedge^{2}V$). As for the affine cone over $\mathbb{OP}^2 =
\mathrm{E_6/P_1}$ and
its secant variety, they are the orbit closures of the action of $\mathbb{C}^* \times \mathrm{E_6}$ on $V_{\omega_1}$, where $\omega_1$ is the weight associated to $\mathrm{P_1}$. So one is tempted to make the following conjecture:

\begin{conj}
Let $(\mathrm{G},V)$ be a prehomogeneous vector space. Let $Z \subset V$ be the
closure of an orbit of $\mathrm{G}$. Assume that $Z$ has Gorenstein rational singularities. Then $Z$ admits a categorical weakly crepant resolution of singularities.
\end{conj}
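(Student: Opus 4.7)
\textit{\underline{Proof Proposal}} :\\
$\blacktriangleright\;$
The plan is to apply the Main Theorem \ref{maintheo}: it suffices to exhibit a wonderful resolution of singularities of $Z$. The natural candidate is built from the $\mathrm{G}$-orbit stratification of $Z$, imitating what happens for determinantal varieties in Example \ref{coli} and for the secant variety of $\mathbb{OP}^2$.

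First I would order the $\mathrm{G}$-orbits contained in $Z$ by dimension: write $Z = \overline{O_0} \supset \overline{O_1} \supset \cdots \supset \overline{O_r}$ where $O_0$ is the dense orbit; at least when the orbit poset is a chain (as is typical in the finite-orbit reductive setting of Sato-Kimura), this sequence is unambiguous. Under these hypotheses $\overline{O_r}$ is the affine cone over a closed homogeneous projective $\mathrm{G}$-variety in $\mathbb{P}(V)$, hence is smooth away from the vertex. I would then perform the sequence of blow-ups along strict transforms in increasing order of dimension:
\[
Z_r \longrightarrow Z_{r-1} \longrightarrow \cdots \longrightarrow Z_1 \longrightarrow Z_0 = Z,
\]
where $\pi_k : Z_k \to Z_{k-1}$ is the blow-up of the strict transform $Y_k$ of $\overline{O_{r+1-k}}$.

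Next, the four conditions of Definition \ref{wonderful} would be checked inductively. The $\mathrm{G}$-equivariance is the decisive tool: each $Y_k$ is $\mathrm{G}$-stable, and along the open orbit of $Y_k$ the singularity type of $Z_{k-1}$ is constant, which should force normal flatness of the center. Gorensteinness of each $Z_k$ and smoothness of the exceptional divisors and their intersections would be controlled via the Kempf--Weyman free-resolution machinery, which gives explicit information on canonical sheaves and tangent cones of orbit closures in reductive prehomogeneous spaces. For the recursive condition $(4)$ on the wonderfulness of the restricted resolution $E_1^{(n)} \to E_1$, one observes that $E_1$ is itself a $\mathrm{G}$-variety whose singular locus is again stratified by $\mathrm{G}$-orbits, so the same inductive blow-up scheme applies to it.

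The main obstacle is that, outside of the nicest prehomogeneous spaces (those of commutative parabolic type, or the regular irreducible ones in the Sato-Kimura classification), the orbit poset need not be a chain and the intermediate orbit closures need not be smooth or even normal. One would then need to refine the stratification by allowing smooth $\mathrm{G}$-stable centers that are not orbit closures, and it is genuinely unclear that such a refinement can always be chosen compatibly with the wonderful-resolution condition. A related difficulty is that the Gorenstein hypothesis on $Z$ is used crucially in Proposition \ref{connexe} to guarantee irreducibility of the successive exceptional divisors; one needs to verify that Gorensteinness propagates to every $Z_k$ under the proposed blow-up sequence, and not just to $Z_0 = Z$. My expectation is that the strategy outlined here proves the conjecture for regular reductive prehomogeneous spaces, but that the fully general statement will require a substantial equivariant enrichment of the Hironaka--wonderful resolution theory.
\hspace{\stretch{1}}$\blacktriangleleft$
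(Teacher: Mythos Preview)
The statement you are attempting to prove is a \emph{conjecture}; the paper does not prove it and explicitly presents it as open. So there is no ``paper's own proof'' to compare against.

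More importantly, your proposed strategy is precisely the one that the paper itself shows can fail. In Example~\ref{G(3,6)} the author considers the tangent variety $Z$ of $\mathrm{Gr}(3,6)\subset\mathbb{P}(\bigwedge^3\mathbb{C}^6)$, which is an orbit closure for the action of $\mathrm{SL}_6$ on a prehomogeneous space with a totally ordered orbit poset $W\subset\sigma_+(W)\subset Z\subset\mathbb{P}(\bigwedge^3 V)$. The natural procedure---blow up the smallest orbit closure, then the strict transform of the next, and so on---is carried out there, and the computation shows that the tangent cone of $Z$ along $W$ is a \emph{double hyperplane}. Hence the first exceptional divisor $E_1$ is globally non-reduced, and no subsequent normally flat blow-up can repair this: the resulting sequence is never a wonderful resolution in the sense of Definition~\ref{wonderful}. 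This directly breaks your inductive scheme at the very first step, in a case where the orbit poset \emph{is} a chain and the smallest orbit closure \emph{is} smooth (it is $\mathrm{Gr}(3,6)$ itself). The assertion that ``along the open orbit of $Y_k$ the singularity type of $Z_{k-1}$ is constant, which should force normal flatness of the center'' is fine, but normal flatness alone does not prevent the exceptional divisor from being non-reduced, and the definition of a wonderful resolution requires $E_1$ to be smooth.

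Your closing caveats about non-chain posets and non-normal orbit closures therefore miss the actual obstruction. The paper's response to this failure is not to refine the stratification but to pass to a generic polar section $\mathcal{P}(Z,p)$, which \emph{does} admit a wonderful resolution; this leads to a second conjecture (the one immediately following Example~\ref{G(3,6)}) rather than a proof of the first.
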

To investigate this conjecture in more details, one can ask the following:

\begin{quest}
Let $(\mathrm{G},V)$ be a prehomogeneous vector space. Let $Z \subset V$ be the
closure of an orbit of $\mathrm{G}$. When does $Z$ admit a
wonderful resolution of singularities?
\end{quest}

The following example shows that the answer to the above question can not be
"always".

\begin{exem}[Tangent variety of $\mathrm{Gr(3,6)}$] \label{G(3,6)}
 
\upshape{Let $V$ be a vector space with $\dim V = 6$ and let $W =
\mathrm{Gr(3,V)} \subset \mathbb{P}(\bigwedge^3 V)$ be
the Grassmannian of $\mathbb{C}^3 \subset V$ inside its Pl\"ucker
embedding. We can decompose $\bigwedge^3 V$ as:

\begin{equation*}
\mathbb{C} \oplus U \oplus U^* \oplus \mathbb{C}, 
\end{equation*}
where $U$  is identified with the space of $3 \times 3$ matrices, (see
\cite{manilands} section $5$ for more details). We denote by $C$ the
determinant on $U$, which can be seen as a map $S^3 U \rightarrow \mathbb{C}$
or as a map $S^2 U \rightarrow U^*$. We also denote by $C^*$ the
determinant on $U^*$.

Let $Z$ denote the tangent variety to $W$. It is shown in \cite{manilands} that
an equation (up to an automorphism of
$\mathbb{P}(\bigwedge^3 V)$) of $Z$ is:
\begin{equation*}
Q(x,X,Y,y) = (3xy - \frac{1}{2} \langle X,Y \rangle)^2 +
\frac{1}{3}(yC(X^{\otimes 3}) +
xC^*(Y^{\otimes 3}) - \frac{1}{6}\langle C^*(Y^{\otimes 2}),C(X^{\otimes 2})
\rangle
\end{equation*}
where $\langle .,. \rangle$ is the standard pairing between $U$ and
$U^*$. The partial derivatives of $Q$ give
the equations of the variety of ``stationary secants'' to $W$, which we denote
by
$\sigma_+(W)$. A simple computation of the Taylor expansion of $Q$ shows
that the variety $\sigma_+(W)$ is singular precisely along $W$, but
contrary to what is claimed in proposition $5.10$ of \cite{manilands}, $W$ is
not defined by all the second derivatives of $Q$. The orbit closures structure
of the
action of $\mathrm{SL_6}$ on $\mathbb{P}(\bigwedge^3 V)$ is the following:

\begin{equation*}
W \subset \sigma_+(W) \subset Z \subset \mathbb{P}(\bigwedge^3 V).
\end{equation*}

As a consequence, the only "natural" \footnote{One would of course like this resolution to be also $\mathrm{SL_6}$-equivariant. So we must start with the blow-up of a $\mathrm{SL_6}$-invariant subvariety.} procedure to get a wonderful resolution of singularities of
$Z$ would be to consider the blow-up of $Z$ along $W$:
\begin{equation*}
 \pi_1 : Z_1 \rightarrow Z 
\end{equation*}
and then the blow-up of $Z_1$ along the strict transform of $\sigma_+(W)$:
\begin{equation*}
 \pi_2 : Z_2 \rightarrow Z_1. 
\end{equation*}
One can check that the
proper transform of $\sigma_+(W)$ under $\pi_1$, the exceptional divisor of
$\pi_2$ and the variety $Z_2$ are smooth. But an easy computation shows that the
tangent cone of $Z$
along any point of $W$ is a double hyperplane, so that the exceptional
divisor $E_1$ of the blow-up of $Z$ along $W$ is globally non reduced. The
strict transform of $E_1$ after any normally flat blow-up of
$Z_1$ will still be globally non reduced. As a
consequence the above sequence of blow-ups will never produce a wonderful
resolution of singularities of $Z$.

\smallskip

Now, consider $p = (p_0,P_0,P_1,p_1)$ a generic point of
$\mathbb{P}(\bigwedge^3 V)$
and let $\P(Q,p)$ be the polar equation of $Q$ with respect to $p$,
that is:

\begin{equation*}
 \P(Q,p) = p_0\frac{\partial Q}{\partial x} + P_0\frac{\partial Q}{\partial X} +
P_1\frac{\partial Q}{\partial Y} + p_1\frac{\partial Q}{\partial y}
\end{equation*}

It is easily noticed that the cubic hypersurface (which we also denote by
$\P(Q,p)$) defined by this equation is smooth and it
contains
$\sigma_+(W) = Z_{sing}$. For any $w \in \sigma_+(W)-W$, the tangent space of
$\P(Q,p)$ at $w$ is transverse to the tangent cone to $Z$ at $w$, so that the
tangent cone to $\P(Z,p) = \P(Q,p) \cap Z$ at $w$ is a cone over a smooth
quadric of dimension $4$ with vertex the embedded tangent space to
$\sigma_+(W)$ at $w$. For any $w \in W = \mathrm{Gr(3,6)}$, the
the tangent space of $\P(Q,p)$ at
$w$ is equal to the reduced tangent cone to $Z$ at $w$. Thus, looking at the
Taylor expansion of $Q$ at $w$, one can prove that
the tangent cone to $\P(Z,p)$ at $w$ is the secant variety of a cone over
$\mathbb{P}^2 \times \mathbb{P}^2$ (this cone over $\mathbb{P}^2 \times
\mathbb{P}^2$
being the set of $\mathbb{C}^3 \subset V$ which intersect $w$ in
dimension at least $2$).
The vertex of this cone is the embedded tangent space to $\mathrm{Gr(3,6)}$ at
$w$ and this cone is singular precisely along the cone over $\mathbb{P}^2
\times \mathbb{P}^2$ (see \cite{manilands} for instance).

\smallskip

Note that the tangent cone to $\P(Z,p)$ at any point $w \in \mathrm{Gr(3,6)}$
does not depend on the choice of a generic $p \in \mathbb{P}(\bigwedge^3 V)$, as
predicted by the theory of L\^e-Teissier (see \cite{abu1}, section $2.2$ for
some
recollections on the theory of L\^e-Teissier in the setting of projective
geometry or \cite{lete1} and \cite{lete2} for the theory in its general
setting). The above description of the tangent cones of $\P(Z,p)$ along its
various strata shows that if one considers the blow-up of $\P(Z,p)$ along
$W$:
\begin{equation*}
 \pi_1 : \P_1 \rightarrow \P(Z,p), 
\end{equation*}
and then the blow-up of $\P_1$ along the strict transform of
$\sigma_+(W)$ through $\pi_1$:
\begin{equation*}
 \pi_2 : \P_2 \rightarrow \P_1, 
\end{equation*}
then one gets a wonderful resolution of singularities of $\P(Z,p)$.}
\end{exem}

The above example suggests the following conjecture.

\begin{conj} Let $(\mathrm{G},V)$ be a prehomogeneous vector space. Let $Z
\subset V$ be the closure of an orbit of $\mathrm{G}$. There is an integer $d
\leq \dim V$ such that for a generic $L \in \mathrm{G(d,\dim V)}$, the polar
$\P(Z,L)$ contains $Z_{sing}$ and admits a wonderful resolution of
singularities.
\end{conj}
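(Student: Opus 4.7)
The plan is to exploit the finite $\mathrm{G}$-orbit stratification of $Z$ together with the L\^e-Teissier theory of generic polar varieties alluded to in Example \ref{G(3,6)}. Let $Z = Z_0 \supset Z_{sing} = Z_1 \supset \cdots \supset Z_s$ be the chain of singular loci, each $Z_i$ being the closure of a $\mathrm{G}$-orbit; this stratification is Whitney regular by homogeneity. Following the model of Example \ref{G(3,6)}, the integer $d$ should be chosen as the polar codimension detecting the deepest polar multiplicity of $Z$ along $Z_{sing}$, so that for generic $L \in \mathrm{G}(d,\dim V)$ the polar $\P(Z,L)$ is a hypersurface in $Z$ automatically containing $Z_{sing}$.

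The main step is to describe the tangent cone of $\P(Z,L)$ along each stratum $Z_i \setminus Z_{i+1}$. By $\mathrm{G}$-equivariance this tangent cone is constant along each orbit, and by L\^e-Teissier it is identified with an iterated polar of $\mathcal{C}_w Z$ by a generic linear subspace. One expects that these tangent cones are themselves cones over smaller prehomogeneous orbit closures -- this is exactly what occurs in Example \ref{G(3,6)}, where one gets a cone over a smooth quadric along the generic singular locus and the secant variety of a cone over $\mathbb{P}^2 \times \mathbb{P}^2$ along $\mathrm{Gr}(3,6)$. This sets up a dimension induction: the wonderful resolution of $\P(Z,L)$ would then be built by blowing up orbit closures in increasing order of codimension, first the minimal orbit closure inside $\P(Z,L)$, then the strict transform of the next, and so on up to $Z_{sing}$. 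The axioms of Definition \ref{wonderful} would follow: normal flatness and smoothness from the L\^e-Teissier description, Gorenstein-ness of the intermediate varieties from the Gorenstein hypothesis on $Z$ together with hypersurface-section arguments, and the compatibility items (3)--(4) from equivariance and the genericity of $L$.

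The main obstacle is twofold. First, one must show that passing to a generic polar uniformly cures the global non-reducedness of tangent cones that prevented $Z$ itself from admitting a wonderful resolution in Example \ref{G(3,6)}. This appears to be a general feature of generic polars, which reduce the multiplicity by one along each stratum, but a uniform proof would require a careful study of the interplay between the polar construction and the orbit stratification, probably via the Nash modification and the behavior of the logarithmic characteristic cycle under polar intersections. Second, and technically more serious, item (3) of Definition \ref{wonderful} -- that each new blow-up center has smooth proper intersection with all previous total transforms of the exceptional divisor $E_1$ -- is a delicate transversality statement that must be propagated from stratum to stratum through the induction. Controlling this transversality uniformly over all prehomogeneous settings, rather than case-by-case as in the known examples, is the core difficulty of the conjecture.
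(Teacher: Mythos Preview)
The statement you are attempting to prove is labelled \texttt{conj} in the paper, and indeed it is presented there as an open conjecture, not as a theorem. The paper offers no proof: it introduces the conjecture with the sentence ``The above example suggests the following conjecture,'' and the only supporting evidence is Example~\ref{G(3,6)}, where the generic polar of the tangent variety of $\mathrm{Gr}(3,6)$ is checked by hand to admit a wonderful resolution. There is therefore no ``paper's own proof'' to compare your proposal against.

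Your proposal is a reasonable heuristic outline of how one might try to attack the conjecture, and you correctly identify the two essential difficulties --- that the generic polar should repair the non-reducedness of tangent cones, and that the transversality required by item~(3) of Definition~\ref{wonderful} must be propagated through the inductive blow-ups. But you should be aware that what you have written is not a proof, nor does the paper claim one exists: both obstacles you flag are genuinely open, and nothing in the paper (or in the references cited) supplies the missing arguments. In particular, the L\^e--Teissier theory gives you local information about generic polars along a fixed stratum, but turning this into a global statement that the successive blow-up centers remain smooth and meet the total transforms properly is precisely the content of the conjecture, not a consequence of known results.
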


Finally, let us mention that \cite{deliu} undergoes a thorough study of a
possible homological projective dual of $\mathrm{Gr(3,V)} \subset
\mathbb{P}(\bigwedge^3 V)$ for $\dim V = 6$. Such a homological dual is expected
to be a  categorical crepant resolution of singularities of the double cover
of $\mathbb{P}(\bigwedge^3 V^*)$ ramified along the projective dual of
$\mathrm{Gr(3,V)}$ (which is equal to the tangent variety of $\mathrm{Gr(3,V^*)}
\subset \mathbb{P}(\bigwedge^3 V^*)$). However, from \cite{deliu}, it is not clear that a categorical crepant resolution of this double cover does
exist. Nevertheless, we strongly believe that the existence of such a
categorical crepant resolution should be linked to the existence of a
categorical crepant resolution of the dual variety of $\mathrm{Gr(3,V)}$, which in turn should be linked to the wonderful resolution of its generic polar. We come back to this circle of questions in \cite{abuaf-titsfreudenthal}.

\end{subsection}
\end{section}

\newpage

\bibliographystyle{alpha}

\bibliography{biblicrepant}

\end{document}